\newtheorem{theorem}{Theorem}[section]
\newtheorem{corollary}[theorem]{Corollary}
\newtheorem{lemma}[theorem]{Lemma}
\newenvironment{remark}{\addtocounter{theorem}{1}\vskip 0.2cm{\sc Remark
\thetheorem.}}{\hfill\vskip 0.2cm}
\newcommand{\argmin}{\operatornamewithlimits{argmin}}
 \newcommand{\sgn}{\operatorname{sgn}}
\numberwithin{equation}{section}
\title{A Class of Solvable Stationary Singular Stochastic Control Problems}
\author{Luis H. R. Alvarez E.\thanks{Department of Accounting \& Finance, Turku School of Economics, FIN-20014 University of Turku, Finland, e-mail:
luis.alvarez@tse.fi } }
\begin{document}
\maketitle

\abstract{We consider the determination of the optimal stationary singular stochastic control of a linear diffusion for a class of average cumulative cost minimization problems arising in various financial and economic applications of stochastic control theory. We present a set of conditions under which the optimal policy is of the standard local time reflection type and state the first order conditions from which the boundaries can be determined. Since the conditions do not require symmetry or convexity of the costs, our results cover also the cases where costs are asymmetric and non-convex. We also investigate the comparative static properties of the optimal policy and delineate circumstances under which higher volatility expands the continuation region where utilizing the control is suboptimal.}\\

\noindent {\bf Keywords:} stationary singular stochastic control, optimal reflection, linear diffusions.\\

\noindent
{\bf JEL Subject Classification:} G35, G31, C44, Q23.\\

\noindent
{\bf AMS Subject Classification:} 93E20, 60G10, 49J10, 49K10.\\

\thispagestyle{empty} \clearpage \setcounter{page}{1}

\section{Introduction}

Many economic and financial decision making problems can be formulated as singular stochastic control problems. For example, the determination of the harvesting policy maximizing the expected cumulative present value of the harvesting yield in the case where the harvesting effort is unbounded constitutes a singular stochastic control problem (cf. \cite{AlSh,He_et_al,LES1,LES2,LO,SoStZh}). Analogously, cash flow management studies investigating optimal dividend distribution, recapitalization, or both in the presence of risk neutrality result into singular control problems (cf. \cite{AlVi,AsTa1,CaSaZa,ChTaZh,HoTa1,HoTa2,JS,MR,Paul,PeKe,SeTa,ShLeGa,Ta,TaZh}). Based on its applicability singular stochastic control has attracted much interest in mathematics and there exists a vast literature focusing on it (cf. \cite{A5,Ba,Bank,BaCh,BaEg,BSW,BK,DuMi1,DuMi2,Fer,GuoTo1,GuoTo2,HaTa,
HaSu1,HaSu2,HeSt,Jacka,Kar1,Kar2,KS1,KS2,KuSt1,KuSt,Ma2012,MR1,MR2,O}). However, ergodic singular stochastic control problems has not been investigated to the same extent. This is particularly surprising from the natural resource management point of view since ergodic controls play a central role in the analysis of  sustainable harvesting policies (see, for example, Chapter 1 in the seminal textbook \cite{Cl}). Since singular harvesting policies are associated with the situation where the admissible harvesting effort is unbounded, analyzing the optimal ergodic singular harvesting policy provides simultaneously insights on the long run sustainability of potentially bounded policies as well.

Motivated by the argument above, the objective of this study is to analyze and solve a class of ergodic singular stochastic control problems of linear diffusions. To this end, we reconsider under different assumptions the singular control problem originally introduced in \cite{JaZe} and analyze a class of average cumulative cost minimization problems where the controlled process is a regular linear diffusion and the costs associated with controlling the process are not necessarily symmetric nor convex. In this way, we plan to investigate how the potential asymmetry or nonconvexity of operative costs affect the optimal policy. Instead of analyzing the entire class of admissible policies at once we follow the analysis developed in \cite{Ha, ShLeGa} and focus on local time control policies maintaining the controlled process between two arbitrary boundaries and derive an explicit representation of the long run expected average costs. Following then the analysis of the pioneering studies \cite{Ha,Kar1,Ta1} considering the ergodic singular control of Brownian motion and \cite{MR3} considering the ergodic singular control of linear diffusions in the presence of smooth costs, we also characterize the stationary value in terms of an associated free boundary problem. We solve the free boundary problem and delineate a set conditions on the cost under which the optimal policy is to reflect the underlying diffusion at two constant boundaries constituting the unique solutions of the optimality conditions.
Since the conditions impose only relatively weak monotonicity and limiting conditions on the costs, our findings indicate that the optimality of ordinary reflection policies does not require the symmetry nor the convexity of costs; a result which is in line with the findings of \cite{JaZe}. We illustrate our general observations in explicitly parameterized examples and characterize circumstances under which our assumptions on the limiting behavior of the cost and drift coefficient can be weakened even further. Interestingly, we notice that the
assumed continuity of the drift coefficient is not always needed for our findings to remain valid and illustrate this explicitly in the case where the controlled process is a Brownian motion with alternating drift. We also investigate the impact of increased volatility on the optimal policy and its value and find that the convexity of the value of the optimal policy implies that higher volatility postpones the optimal exercise of the control policy by expanding the continuation region. In order to extend previous findings based on ordinary Brownian motion and to simultaneously illustrate explicitly the positive dependence between volatility and the incentives to postpone the utilization of the control policy, we present a class of models resulting into symmetric reflection policies and characterize a special case where the optimal boundaries are linearly dependent on the volatility of the underlying. Finally, in order to cover most standard models arising in the literature applying singular control of linear diffusions, we also study the cases where the underlying can be controlled only to one direction and establish conditions under which the optimal policy can be characterized as an instantaneous reflection policy at a single optimal boundary.

It is at this point worth pointing out that the ergodic singular stochastic control of a linear diffusion has been previously studied in \cite{We1,JaZe,We2,HeStZh} (see also \cite{JaZe1} for the associated impulse control model). \cite{We1} investigates the optimal stationary singular policy minimizing the long-term average cost in a setting where the controlling costs are symmetric and the decision maker chooses optimally also the drift coefficient of the underlying. \cite{JaZe}, in turn, investigates the optimal ergodic singular policy minimizing the long-term average cost in the presence of state-dependent costs. They delineate a set of general conditions under which the problem admits a unique solution and present a set of equations characterizing the optimal boundaries at which the underlying diffusion should be optimally reflected. One of the key findings of \cite{JaZe} is that the convexity of costs is not necessary for the existence of an optimal policy. \cite{We2} investigates the relationship between the value of the optimal singular policy minimizing the expected cumulative present value of the costs and the value of the optimal ergodic policy. He extends the Abelian limit result originally established in \cite{Kar1} for Brownian motion and states a set of general conditions under which the same conclusion is valid for a large class of linear diffusions as well.
Finally, \cite{HeStZh} investigates the optimal impulse control policy minimizing the long-term average cost in the case where the underlying is modeled as a one-dimensional diffusion. They establish conditions for the optimality of a standard $(s, S)$-ordering policy for a large class of cost functions. The analysis in \cite{HeStZh} is related to the one sided ergodic singular control problem considered in this study, since they also allow for potentially asymmetric holding/back-order and ordering costs within an ergodic impulse control setting. As is known from the literature on impulse control in the case where the objective is to optimize the expected cumulative present value of future revenues, the optimal exercise boundaries of impulse control policies tend towards the ones associated to singular control as the transaction/ordering costs tend to zero (cf. \cite{MR2, O}). We notice that the same conclusion is valid in the single boundary setting as well and find that the optimality conditions characterizing the optimal boundaries developed in \cite{HeStZh} coincide with ours in the limit.

Ergodic singular stochastic control and ergodic control of singular processes has also been previously studied by relying on alternative and more general approaches than the one utilized in this study. \cite{KuSt1} analyzes a large class of stationary control problems by extending the findings of the studies \cite{St1,St2} considering the long run average cost minimization for a broad class of processes and controls satisfying a martingale problem defined in terms of the generator of the processes.
\cite{HeSt}, in turn, investigates based on the findings of \cite{St2} the numerical implementation of an infinite-dimensional linear programming
formulation of stochastic control problems involving singular
stochastic processes.  \cite{KuSt} develops conditions under which the optimal control of processes having both absolutely continuous and singular controls can be equivalently formulated as linear programs over a space of measures on the state and control spaces.

The contents of this study are as follows. The general two boundary problem is presented and solved in section 2. The results of the general analysis are then illustrated in explicitly parameterized models in Section 3. The single boundary cases are treated and illustrated in Section 4. Finally, Section  5 concludes our study.

\section{The Stationary Singular Stochastic Control Problem}
Let $\{X^{Z}_t; t\geq 0\}$ be a process defined on a complete
filtered probability space $(\Omega, \mathbb{P}, \{{\cal
F}_{t}\}_{t\geq 0}, {\cal F})$ and evolving on $\mathbb{R}$ according to the dynamics characterized by the stochastic differential
equation
\begin{align}
dX^{Z}_t = \mu(X^{Z}_t) dt + \sigma(X^{Z}_t)dW_t + dU_t -
dD_t, {\hskip .1in}X^{Z}_0 = x\in \mathbb{R},\label{sde}
\end{align}
where the drift coefficient $\mu:\mathbb{R}\mapsto\mathbb{R}$ is assumed to be continuous on $\mathbb{R}$. We also assume that the diffusion coefficient
$\sigma:\mathbb{R}\mapsto\mathbb{R}_+$ is continuous and satisfies the nondegeneracy condition $\sigma^2(x)>0$ for all $x\in \mathbb{R}$ and that the mapping
$x\mapsto (1+|\mu(x)|)/\sigma^{2}(x)$ is locally integrable on $\mathbb{R}$. As is known from the literature on stochastic differential equations, these conditions
guarantee the existence of a weak solution defined up to an explosion time for
the stochastic differential equation \eqref{sde} in the absence of
interventions (cf. \cite{KaSh}, Section 5.5). In \eqref{sde} $Z_t=(U_t,D_t)$ denote the applied control policies.
We call a control policy
admissible if it is non-negative, non-decreasing,
right-continuous, and $\{{\cal F}_{t}\}$-adapted, and denote the
set of admissible controls as $\Lambda$.

In accordance with the classical theory on linear diffusions, we define the differential operator ${\cal A}$ representing the infinitesimal generator of the underlying
diffusion $X$ as
\begin{align}
{\cal A} = \frac{1}{2}\sigma^{2}(x)\frac{d^2}{dx^2} +
\mu(x)\frac{d}{dx}\label{do}
\end{align}
and denote as
$$
S'(x) = \exp\left(-\int^x \frac{2\mu(y)dy}{\sigma^2(y)}\right)
$$
the density of the scale of $X$ and as
$$
m'(x) = \frac{2}{\sigma^2(x)S'(x)}
$$
the density of the speed measure of $X$.

Given the assumptions above, we now plan to consider the determination of the admissible pair of singular control policies $U_t,D_t\in \Lambda$ {\em minimizing} the expected long run average costs
\begin{align}
\limsup_{T\rightarrow \infty}\frac{1}{T}\mathbb{E}_x\int_0^T\left(c(X_s^Z)ds + q_u dU_s + q_d dD_s\right)\label{sscp}
\end{align}
where
$c:\mathbb{R}\mapsto\mathbb{R}_+$ is a continuous mapping measuring the flow of costs incurred from
operating with the prevailing stock, and $q_u,q_d \in \mathbb{R}_+$ are known parameters which can be interpreted as the unit cost of utilizing the associated control.  As usually, we assume that the costs $c$ are minimized at $0$ and, consequently, that $c(x)\geq c(0)\geq 0$ for all $x\in \mathbb{R}$.

In what follows the mappings $\pi_1(x):=c(x)+q_d \mu(x)$ and $\pi_2(x):=c(x)-q_u\mu(x)$ will play a central role in the determination of the optimal
stationary policy. We will assume throughout this section that these functions satisfy the following two conditions:
\begin{itemize}
  \item[(i)] there is a unique $\hat{x}_i=\argmin\{\pi_i(x)\}\in \mathbb{R}$ so that $\pi_i(x)$ is decreasing on $(-\infty, \hat{x}_i)$ and increasing on $(\hat{x}_i,\infty)$, where $i=1,2$, and
  \item[(ii)]  $\lim_{x\uparrow\infty}\pi_1(x) = \lim_{x\downarrow-\infty}\pi_2(x)=\infty$.
\end{itemize}
It is worth emphasizing that in the absence of drift, that is, when $\mu\equiv 0$, $\pi_1=\pi_2=c$ and $\hat{x}_1=\hat{x}_2=0$. In that case, assumption (i) essentially states that the operating costs $c$ needs to be decreasing below $0$ and increasing above it.

Having presented the considered ergodic control problem, we now follow the approach introduced in \cite{Ha} and \cite{ShLeGa}
(see also \cite{Kar1} and \cite{Ma2012}) and focus on the admissible reflecting barrier policies $U_t^a, D_t^b\in \Lambda$ which maintain
the process between two constant thresholds $a$ and $b$. Both policies are continuous on $t>0$ and increase only when
$X_t^{Z} = a$ and $X_t^{Z} = b$, respectively.  We can now establish the following auxiliary finding characterizing the
expected long-run average cumulative costs as a function of the boundaries $a$ and $b$.
\begin{lemma}\label{stationary}
Assume that $x\in [a,b]$. Then, the expected long-run average cumulative costs read as
\begin{align*}
\lim_{T\rightarrow\infty}\frac{1}{T}\mathbb{E}_x\int_0^T\left(c(X_s^Z)ds + q_u dU^{a}_{s} + q_d dD^{b}_{s}\right)=
C(a,b),
\end{align*}
where
\begin{align}
C(a,b) = \frac{1}{m(a,b)}\left[\int_a^bc(t)m'(t)dt+\frac{q_u}{S'(a)}+\frac{q_d}{S'(b)}\right].\label{aggregate}
\end{align}
Consequently, if a pair $(a^\ast,b^\ast)$ of boundaries minimizing the expected long-run average cumulative costs exists, it has to satisfy the ordinary first
order conditions
\begin{align}
C(a^\ast,b^\ast) &=\pi_2(a^\ast) \label{optimalitycond1a}\\
C(a^\ast,b^\ast) &=\pi_1(b^\ast).\label{optimalitycond2a}
\end{align}
which can be re-expressed as
\begin{align}
I_1(a^\ast,b^\ast):=\int_{a^\ast}^{b^\ast} (\pi_1(t)-\pi_1(b^\ast))m'(t)dt+\frac{q_d+q_u}{S'(a^\ast)} &=0 \label{optimalitycond1}\\
I_2(a^\ast,b^\ast):=\int_{a^\ast}^{b^\ast} (\pi_2(t)-\pi_2(a^\ast))m'(t)dt+\frac{q_d+q_u}{S'(b^\ast)} &=0.\label{optimalitycond2}
\end{align}
\end{lemma}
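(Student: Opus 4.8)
The plan is to treat the two assertions of the lemma separately: first to derive the closed form \rf{aggregate} for the ergodic cost through an It\^o/Dynkin argument anchored on an auxiliary inhomogeneous ODE, and then to obtain the optimality conditions by straightforward differentiation of $C(a,b)$ with respect to the two boundaries.

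For the first assertion I would seek a twice continuously differentiable function $w$ on $[a,b]$ and a constant $C$ solving ${\cal A}w(x) = C - c(x)$ on $(a,b)$ subject to the Neumann conditions $w'(a) = -q_u$ and $w'(b) = q_d$. Applying It\^o's theorem to $w(X_t^{Z})$ and exploiting that the reflecting controls increase only at the respective boundaries, so that $\int_0^T w'(X_s^{Z})dU_s^{a} = w'(a)U_T^{a}$ and $\int_0^T w'(X_s^{Z})dD_s^{b} = w'(b)D_T^{b}$, yields after inserting ${\cal A}w = C - c$ and the boundary data
\[
\mathbb{E}_x\int_0^T\left(c(X_s^{Z})ds + q_u dU_s^{a} + q_d dD_s^{b}\right) = CT - \mathbb{E}_x\left[w(X_T^{Z}) - w(x)\right].
\]
Since $X^{Z}$ is confined to the compact interval $[a,b]$, the local martingale from the stochastic integral is a genuine martingale of vanishing expectation, while $w$ stays bounded, so dividing by $T$ and letting $T\to\infty$ identifies the ergodic cost with $C$ and, incidentally, upgrades the $\limsup$ to a true limit. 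To evaluate $C$ I would write the generator in its scale/speed form ${\cal A}w = (1/m'(x))(d/dx)\left(w'(x)/S'(x)\right)$, turning the ODE into $(d/dx)\left(w'(x)/S'(x)\right) = (C - c(x))m'(x)$; integrating this across $[a,b]$, inserting the boundary values of $w'/S'$, and solving the resulting linear relation for $C$ reproduces exactly \rf{aggregate}, the remaining integration constants of $w$ being irrelevant as only $C$ survives in the limit.

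For the second assertion I would differentiate $C(a,b) = N(a,b)/m(a,b)$, with $N(a,b)$ the bracketed numerator of \rf{aggregate}. The one computation doing the real work is the identity $(d/dx)(1/S'(x)) = \mu(x)m'(x)$, which follows from $S''/S' = -2\mu/\sigma^2$ together with $m' = 2/(\sigma^2 S')$. Using it on the boundary terms, the quotient rule collapses to $\partial_a C = (m'(a)/m(a,b))(C(a,b) - \pi_2(a))$ and $\partial_b C = (m'(b)/m(a,b))(\pi_1(b) - C(a,b))$, so that, because $m'>0$ and $m(a,b)>0$, the stationarity conditions $\partial_a C = \partial_b C = 0$ at an interior minimizer are exactly \rf{optimalitycond1a} and \rf{optimalitycond2a}. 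The equivalent integral forms then follow from the same identity: integrating it gives $\int_a^b \mu(t)m'(t)dt = 1/S'(b) - 1/S'(a)$, and substituting $\pi_1 = c + q_d\mu$ and $\pi_2 = c - q_u\mu$ shows that $I_1(a,b) = m(a,b)(C(a,b) - \pi_1(b))$ and $I_2(a,b) = m(a,b)(C(a,b) - \pi_2(a))$, whence the vanishing of $I_1$ and $I_2$ in \rf{optimalitycond1} and \rf{optimalitycond2} is merely a rewriting of the first order conditions.

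I expect the analytical heart of the argument to be the justification of the ergodic limit rather than the algebra. One must know that the doubly reflected diffusion on $[a,b]$ is well posed and positive recurrent, that the boundary integrals in It\^o's formula genuinely reduce to the local-time pushes $w'(a)U_T^{a}$ and $w'(b)D_T^{b}$, and that the stochastic integral contributes nothing in expectation. All three are secured by the confinement of $X^{Z}$ to the bounded interval $[a,b]$ and the resulting boundedness of $w$, $w'$ and $\sigma$ there, so the principal obstacle is careful bookkeeping at the boundaries rather than any delicate estimate.
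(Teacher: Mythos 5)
Your proof is correct, and its second half coincides with the paper's: the same quotient-rule differentiation of $C(a,b)$, driven by the identity $\frac{d}{dx}\bigl(1/S'(x)\bigr)=\mu(x)m'(x)$ (equivalently \eqref{canonical}), gives $\partial_a C=\frac{m'(a)}{m(a,b)}(C-\pi_2(a))$ and $\partial_b C=\frac{m'(b)}{m(a,b)}(\pi_1(b)-C)$, and your identities $I_1(a,b)=m(a,b)(C(a,b)-\pi_1(b))$ and $I_2(a,b)=m(a,b)(C(a,b)-\pi_2(a))$ check out. The first half, however, takes a genuinely different route. The paper applies the change-of-variable formula to an \emph{arbitrary} $f\in C^2$, lets $T\to\infty$ invoking ergodic results for linear diffusions to obtain $\int_a^b(\mathcal{A}f)(t)\frac{m'(t)}{m(a,b)}dt+f'(a)\alpha-f'(b)\beta=0$, where $\alpha,\beta$ are the long-run rates of $U^a,D^b$, and then pins down $\alpha=1/(S'(a)m(a,b))$ and $\beta=1/(S'(b)m(a,b))$ with the two test functions $f(x)=x$ and $f(x)=S(x)$; the representation \eqref{aggregate} then follows by combining these rates with the ergodic theorem for the running-cost term. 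You instead build a corrector: a pair $(w,C)$ solving $\mathcal{A}w=C-c$ on $(a,b)$ with $w'(a)=-q_u$, $w'(b)=q_d$, so that a single application of It\^o's formula converts the entire cost into $CT$ plus bounded terms. Your route needs no ergodic theorem at all (the corrector absorbs the running-cost average), upgrades the $\limsup$ to a true limit for free, and is exactly the device the paper deploys immediately \emph{after} the lemma to tie the value to the free boundary problem \eqref{freeboundary}--\eqref{suboptrepr}; the paper's route, in exchange, yields the stationary reflection rates $\alpha$ and $\beta$ explicitly, which have independent interest (cf. II.6.35 in \cite{BS}) and make the stationary-distribution reading of $C(a,b)$ transparent. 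One small ordering point for rigor: rather than positing that $(w,C)$ exists and then deducing what $C$ must be, define $C:=C(a,b)$ by \eqref{aggregate} and construct $w$ explicitly through $w'(x)=S'(x)\left[-q_u/S'(a)+\int_a^x(C-c(t))m'(t)dt\right]$; the terminal condition $w'(b)=q_d$ then holds precisely because of this choice of $C$, which is the compatibility condition for the Neumann problem you wrote down.
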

\begin{proof}
Let $f\in C^2(\mathbb{R})$ be an arbitrary twice continuously differentiable function and let $x\in [a,b]$. Applying the Doléans-Dade-Meyer change of variable formula to the function $f$ then yields
\begin{align*}
f(X_T^Z) = f(x) + \int_0^T(\mathcal{A}f)(X_s^Z)ds + \int_{0}^{T}
\sigma(X_s^Z)f'(X^{Z}_{s})dW_s + f'(a)U^{a}_{T} - f'(b)D^{b}_{T}
\end{align*}
since $X_t^Z$ is continuous on $[a,b]$ and the processes $U^{a}_{t}$ and $D^{b}_{t}$ increase only at the boundaries $a$ and $b$, respectively. Reordering terms, dividing with $T$, and taking expectations yields
\begin{align*}
\frac{\mathbb{E}_x[f(X_T^Z)]-f(x)}{T}= \frac{1}{T}\mathbb{E}_x\int_0^T(\mathcal{A}f)(X_s^Z)ds + f'(a)\frac{\mathbb{E}_x[U^{a}_{T}]}{T} - f'(b)\frac{\mathbb{E}_x[D^{b}_{T}]}{T},
\end{align*}
since $\sigma(x)f'(x)$ is bounded on $[a,b]$. Since $m(a,b) < \infty$ we find by letting $T\rightarrow \infty$ and invoking standard ergodic results for  linear diffusions that (cf. \cite{BS}, pp. 36--38; see also \cite{Ha}, pp. 89--92)
\begin{align*}
\int_a^b(\mathcal{A}f)(t)\frac{m'(t)}{m(a,b)}dt + f'(a)\alpha - f'(b)\beta = 0,
\end{align*}
where $\alpha = \lim_{t\rightarrow\infty}\mathbb{E}_x[U^{a}_{T}]/T$ and $\beta = \lim_{t\rightarrow\infty}\mathbb{E}_x[D^{b}_{T}]/T$. Choosing $f(x)=x$ and then $f(x)=S(x)$ yields the system of equations
\begin{align*}
m(a,b)(\beta-\alpha) &=\frac{1}{S'(b)}-\frac{1}{S'(a)}\\
S'(b)\beta -S'(a)\alpha &= 0
\end{align*}
from which we deduce that
$$
\beta=\frac{1}{S'(b)m(a,b)}\quad\quad\textrm{ and }\quad \quad \alpha = \frac{1}{S'(a)m(a,b)}
$$
which proves the alleged representation (see II.6.35 on p. 36 in \cite{BS}).

In order to establish the optimality conditions, we first find that standard differentiation of $C(a,b)$  yields
\begin{align*}
\frac{\partial C}{\partial a}(a,b)&=\frac{m'(a)}{m^2(a,b)}\left[\int_a^b c(t)m'(t)dt - \pi_2(a)m(a,b) + \frac{q_d}{S'(b)}+\frac{q_u}{S'(a)}\right]=\frac{m'(a)}{m(a,b)}\left[C(a,b)-\pi_2(a)\right]\\
\frac{\partial C}{\partial b}(a,b)&=\frac{m'(b)}{m^2(a,b)}\left[\pi_1(b)m(a,b) - \int_a^b c(t)m'(t)dt - \frac{q_d}{S'(b)}-\frac{q_u}{S'(a)}\right]=\frac{m'(b)}{m(a,b)}\left[\pi_1(b)-C(a,b)\right]
\end{align*}
proving the conditions \eqref{optimalitycond1a} and  \eqref{optimalitycond2a}.
Utilizing now the identity
\begin{align}
\int_a^b\mu(t)m'(t)dt=\frac{1}{S'(b)}- \frac{1}{S'(a)}\label{canonical}
\end{align}
demonstrates that the partial derivatives can be rewritten as
\begin{align}
\frac{\partial C}{\partial a}(a,b)&=\frac{m'(a)}{m^2(a,b)}\left[\int_a^b (\pi_2(t)-\pi_2(a))m'(t)dt +\frac{q_d+q_u}{S'(b)}\right]\label{optimalitycond1b}\\
\frac{\partial C}{\partial b}(a,b)&=\frac{m'(b)}{m^2(a,b)}\left[\int_a^b (\pi_1(b)-\pi_1(t))m'(t)dt - \frac{q_d+q_u}{S'(a)}\right]\label{optimalitycond2b}
\end{align}
from which the ordinary first order conditions \eqref{optimalitycond1} and \eqref{optimalitycond2} follow.
\end{proof}

\begin{remark}
It is at this point worth emphasizing that the findings of Lemma \ref{stationary} can be extended to the state-dependent cost setting
\begin{align}
\inf_{U,D\in \Lambda}\limsup_{T\rightarrow \infty}\frac{1}{T}\mathbb{E}_x\int_0^T\left(c(X_s^Z)ds + q_u(X_s^Z)\circ dU_s + q_d(X_s^Z)\circ dD_s\right)\label{JaZeProb}
\end{align}
originally considered in \cite{JaZe}. Focusing again on the reflecting barrier policies $U_t^a$ and $D_t^b$ yields
\begin{align*}
\lim_{T\rightarrow\infty}\frac{1}{T}\mathbb{E}_x\int_0^T\left(c(X_s^Z)ds + q_u(X_s^Z)\circ dU_s^a + q_d(X_s^Z)\circ dD_s^b\right)=
\frac{1}{m(a,b)}\left[\int_a^bc(t)m'(t)dt+\frac{q_u(a)}{S'(a)}+\frac{q_d(b)}{S'(b)}\right].
\end{align*}
Consequently, if the prices $q_u(x)$ and $q_d(x)$ are continuously differentiable, we find that if a pair $(a^\ast,b^\ast)$ of boundaries minimizing the expected long-run average cumulative costs exists, it has to satisfy in this case the ordinary first order conditions
\begin{align*}
\int_{a^\ast}^{b^\ast} (\tilde{\pi}_1(t)-\tilde{\pi}_1(b^\ast))m'(t)dt+\frac{q_d(a)+q_u(a)}{S'(a^\ast)} &=0, \\
\int_{a^\ast}^{b^\ast} (\tilde{\pi}_2(t)-\tilde{\pi}_2(a^\ast))m'(t)dt+\frac{q_d(b)+q_u(b)}{S'(b^\ast)} &=0,
\end{align*}
where $\tilde{\pi}_1(x)=c(x)+\frac{1}{2}\sigma^2(x)q_d'(x) + \mu(x)q_d(x)$ and $\tilde{\pi}_2(x)=c(x)-\frac{1}{2}\sigma^2(x)q_u'(x) - \mu(x)q_u(x)$.
\end{remark}

Before proceeding in the analysis of the considered control problem, we now connect our analysis to the free boundary value approach typically utilized in the determination of the optimal policy.
To this end, let $a < 0 < b$ be two arbitrary constant boundaries. Our objective is now to determine the twice continuously differentiable function $v:\mathbb{R}\mapsto \mathbb{R}_+$ as well as the two boundaries $a<0<b$ and the parameter $\lambda\in \mathbb{R}_+$ solving the free boundary problem
\begin{align}\label{freeboundary}
\begin{split}
(\mathcal{A}v)(x)+c(x)&= \lambda ,\quad x\in (a,b),\\
v(x) &=q_d(x-b)+v(b),\quad x\in [b,\infty),\\
v(x) &= q_u(a - x) + v(a),\quad x\in (-\infty,a].
\end{split}
\end{align}
 Since
$$
\frac{d}{d}\frac{v'(x)}{S'(x)}= (\mathcal{A}v)(x)m'(x) = (\lambda-c(x))m'(x)
$$
we find by integrating over the interval $(a,b)$ and invoking the boundary conditions $v'(a)=-q_u$ and $v'(b)=q_d$ that
\begin{align}\label{equilibrium}
\int_a^b c(t)m'(t)dt-\lambda m(a,b) + \frac{q_d}{S'(b)}+ \frac{q_u}{S'(a)}=0.
\end{align}
Consequently,
\begin{align}\label{suboptrepr}
\lambda = \frac{1}{m(a,b)}\left[\int_{a}^{b}c(t)m'(t)dt+\frac{q_u}{S'(a)}+\frac{q_d}{S'(b)}\right] = C(a,b).
\end{align}
Finally, imposing the conditions $v''(a)=v''(b)=0$ guaranteeing the second order differentiability of the value across the boundaries implies that
\begin{align}
\lambda = c(b)+q_d\mu(b) = c(a)-q_u\mu(a)\label{2ndorder}
\end{align}
which coincide with the ordinary first order optimality conditions \eqref{optimalitycond1} and \eqref{optimalitycond2}. Consequently,
we notice that the smoothness of the value is associated with the optimality of the boundaries.
We can now establish the following existence and uniqueness result.
\begin{theorem}\label{thm1}
The optimality conditions \eqref{optimalitycond1} and \eqref{optimalitycond2} have a uniquely determined solution
$(a^\ast, b^\ast)\in(-\infty,\hat{x}_2)\times(\hat{x}_1,\infty)$. Especially,
\begin{align}\label{representation}
C(a^\ast,b^\ast)= \lambda^\ast = \pi_2(a^\ast)=\pi_1(b^\ast).
\end{align}
Moreover, the marginal value $v'(x)$ satisfies the inequality
$-q_u\leq v'(x)\leq q_d$ for all $x\in \mathbb{R}$.
\end{theorem}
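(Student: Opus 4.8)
The plan is to collapse the two-dimensional system \eqref{optimalitycond1}--\eqref{optimalitycond2} to a single strictly monotone equation in an auxiliary variable $\lambda$, and to read off location and uniqueness of the root from the shape hypotheses (i)--(ii). Before parametrizing, I would first record that every solution of the optimality conditions automatically lies in the asserted rectangle. Indeed, by \eqref{optimalitycond1a}--\eqref{optimalitycond2a} a solution satisfies $C(a,b)=\pi_2(a)=\pi_1(b)$, and if one had $b\le\hat{x}_1$ then $\pi_1$ would be decreasing throughout $[a,b]$, forcing $\pi_1(t)-\pi_1(b)\ge0$ there and hence $I_1(a,b)\ge(q_d+q_u)/S'(a)>0$, contradicting \eqref{optimalitycond1}; symmetrically, $a\ge\hat{x}_2$ would give $I_2(a,b)>0$, contradicting \eqref{optimalitycond2}. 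Thus any solution has $a<\hat{x}_2$ and $b>\hat{x}_1$.

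Next, for each $\lambda>\max\{\pi_1(\hat{x}_1),\pi_2(\hat{x}_2)\}$, assumptions (i)--(ii) furnish a unique $a(\lambda)<\hat{x}_2$ solving $\pi_2(a)=\lambda$ on the decreasing branch and a unique $b(\lambda)>\hat{x}_1$ solving $\pi_1(b)=\lambda$ on the increasing branch, with $a(\lambda)$ strictly decreasing and $b(\lambda)$ strictly increasing, so that $a(\lambda)<b(\lambda)$ on a half-line $(\lambda_0,\infty)$. Writing $\pi_1=c+q_d\mu$ and $\pi_2=c-q_u\mu$ and using the scale identity \eqref{canonical}, a direct substitution shows that along this curve
\[
I_1(a(\lambda),b(\lambda))=I_2(a(\lambda),b(\lambda))=:N(\lambda).
\]
Since by the first paragraph every solution is of the form $(a(\lambda),b(\lambda))$, solving the system reduces exactly to finding a root of the single function $N$.

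The crux is the differentiation of $N$. The whole point is that the boundary contributions generated by $a'(\lambda)$ and $b'(\lambda)$ appear multiplied by $\pi_1(b(\lambda))-\lambda$ and $\lambda-\pi_2(a(\lambda))$, both of which vanish identically by the definition of $a(\lambda),b(\lambda)$; everything else cancels and one is left with $N'(\lambda)=-m(a(\lambda),b(\lambda))<0$. Hence $N$ is strictly decreasing, which gives uniqueness. For existence I would examine the two ends of $(\lambda_0,\infty)$: as $\lambda\downarrow\lambda_0$ either the interval collapses to a point, leaving the strictly positive residual $(q_d+q_u)/S'(\cdot)$, or one of the boundaries attains the minimiser of the corresponding $\pi_i$, making the integrand in the matching representation of $N$ nonnegative, so $N>0$ near $\lambda_0$; meanwhile $N'=-m$ together with $m(a(\lambda),b(\lambda))$ being bounded away from $0$ for large $\lambda$ forces $N(\lambda)\to-\infty$. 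The intermediate value theorem then produces a unique $\lambda^\ast$, and $(a^\ast,b^\ast):=(a(\lambda^\ast),b(\lambda^\ast))$ is the unique solution; the representation \eqref{representation} follows because $N(\lambda^\ast)=0$ is precisely $C(a^\ast,b^\ast)=\lambda^\ast$ by \eqref{suboptrepr}.

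Finally, for the gradient bounds I would integrate the identity $\frac{d}{dx}\bigl(v'(x)/S'(x)\bigr)=(\lambda^\ast-c(x))m'(x)$ from $x$ to $b^\ast$. Using $\lambda^\ast=\pi_1(b^\ast)$ and \eqref{canonical}, the inequality $v'(x)\le q_d$ becomes $\Phi(x):=\int_x^{b^\ast}(\pi_1(t)-\pi_1(b^\ast))m'(t)dt\le0$; since $\Phi(b^\ast)=0$, $\Phi(a^\ast)=-(q_d+q_u)/S'(a^\ast)\le0$ by \eqref{optimalitycond1}, and $\Phi'(x)$ has the sign of $\pi_1(b^\ast)-\pi_1(x)$, which is nonpositive then nonnegative across $[a^\ast,b^\ast]$ because $\pi_1$ has a single trough and $b^\ast>\hat{x}_1$, the maximum of $\Phi$ is attained at an endpoint and is therefore $\le0$. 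A mirror-image argument using $\pi_2$, $a^\ast$ and \eqref{optimalitycond2} gives $v'(x)\ge-q_u$ on $[a^\ast,b^\ast]$, while outside $[a^\ast,b^\ast]$ the marginal value is constant and equal to $q_d$ or $-q_u$. The genuinely delicate points are the cancellation that yields $N'=-m$ and the monotonicity bookkeeping that confines the root to the correct branches.
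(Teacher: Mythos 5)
Your strategy is, at its core, the paper's own: reduce the two--dimensional system to one--dimensional root finding along the matched curve $\pi_2(a)=\pi_1(b)$ (legitimate because $I_1(a,b)-I_2(a,b)=(\pi_2(a)-\pi_1(b))m(a,b)$), establish strict monotonicity of the resulting scalar function, check positivity at one end of its domain and divergence to $-\infty$ at the other, and then obtain the gradient bounds from the representation \eqref{exp1} and its mirror image. The differences are organizational but attractive: you parametrize the curve by the common level $\lambda$ rather than by the left endpoint $a$ (the paper's $g(a)=I_1(a,b_a)$), which collapses the paper's case analysis over the relative positions of $\hat{x}_1,\hat{x}_2,\pi_1(\hat{x}_1),\pi_2(\hat{x}_2)$ into just two cases as $\lambda\downarrow\lambda_0$ (interval collapse versus a boundary reaching the minimizer), and your upfront localization showing that \emph{every} solution has $a<\hat{x}_2$ and $b>\hat{x}_1$ is stated more explicitly than in the paper. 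Your endpoint/unimodality argument for $-q_u\le v'\le q_d$ is essentially identical to the paper's (your $\Phi$ and its mirror are the paper's \eqref{exp1} and the function $D$).

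The one genuine gap is the step $N'(\lambda)=-m(a(\lambda),b(\lambda))$. You obtain it by the chain rule, which presupposes that the branch inverses $a(\lambda)$ and $b(\lambda)$ are differentiable; under the standing assumptions $c$ and $\mu$ are merely continuous, so $\pi_1,\pi_2$ are only continuous and unimodal and $a(\cdot),b(\cdot)$ need not be differentiable. This is exactly the trap the paper sidesteps by estimating a two--point difference instead of differentiating. The repair is immediate and uses precisely the cancellation you identified: for $\lambda_1<\lambda_2$, writing $N(\lambda)=\int_{a(\lambda)}^{b(\lambda)}(\pi_1(t)-\lambda)m'(t)\,dt+(q_d+q_u)/S'(a(\lambda))$, splitting the integral and converting the piece over $[a(\lambda_2),a(\lambda_1)]$ with \eqref{canonical} gives
\[
N(\lambda_2)-N(\lambda_1)=\int_{a(\lambda_2)}^{a(\lambda_1)}(\pi_2(t)-\lambda_2)m'(t)\,dt+\int_{b(\lambda_1)}^{b(\lambda_2)}(\pi_1(t)-\lambda_2)m'(t)\,dt-(\lambda_2-\lambda_1)\,m(a(\lambda_1),b(\lambda_1)),
\]
where both integrands are nonpositive by the monotonicity of $\pi_2$ and $\pi_1$ on the respective branches; hence $N(\lambda_2)-N(\lambda_1)\le-(\lambda_2-\lambda_1)m(a(\lambda_1),b(\lambda_1))<0$. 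This delivers strict monotonicity and, keeping $\lambda_1$ fixed and letting $\lambda_2\to\infty$, the divergence $N(\lambda)\to-\infty$ without your separate remark that $m$ is bounded away from zero. With that replacement your argument is complete and correct.
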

\begin{proof}
We first observe that since
$$
I_1(a,b)-I_2(a,b)=(\pi_2(a)-\pi_1(b))m(a,b),
$$
the roots are found from the set of thresholds satisfying $\pi_2(a)=\pi_1(b)$ as indicated by \eqref{2ndorder}. It is, therefore, sufficient to focus on the
function
\begin{align}
g(a) = \int_a^{b_a} c(t)m'(t)dt-\pi_1(b_a) m(a,b_a) + \frac{q_d}{S'(b_a)}+ \frac{q_u}{S'(a)}\label{roots}
\end{align}
where $b_a:=\{x\in[\hat{x}_1,\infty): \pi_1(x)=\pi_2(a)\}$ is defined for $a$ in appropriate subsets of $(-\infty,\hat{x}_2]$.

In order to establish the sets where $b_a$ is well-defined, consider first the case where $\hat{x}_1\geq \hat{x}_2$. If $\pi_1(\hat{x}_1)\geq \pi_2(\hat{x}_2)$ then our assumptions guarantee that there is a unique threshold $\hat{a}=\{x\in(-\infty,\hat{x}_2]:\pi_2(x) = \pi_1(\hat{x}_1)\}$ so that $b_a$ is well-defined for all $a\leq \hat{a}$.
Analogously, if $\pi_1(\hat{x}_1)\leq \pi_2(\hat{x}_2)$, then the function $b_a$ is well-defined for all $a\leq \hat{x}_2$. Consider now instead the case where $\hat{x}_1\leq \hat{x}_2$. It is then clear that our assumptions guarantee that if $\pi_1(\hat{x}_1)\geq \pi_2(\hat{x}_1)$ then we again observe that
$b_a$ is well-defined for all $a\leq \hat{a}$. If $\pi_2(\hat{x}_2)\geq \pi_1(\hat{x}_2)$, then  $b_a$ is well-defined for all $a\leq \hat{x}_2$. Finally, if either $\pi_2(\hat{x}_1)\geq \pi_1(\hat{x}_1)\geq \pi_2(\hat{x}_2)$ or $\pi_1(\hat{x}_2)\geq \pi_2(\hat{x}_2)\geq \pi_1(\hat{x}_1)$,
then there is a unique intersection point $\tilde{x}\in[\hat{x}_1,\hat{x}_2]$ at which $\pi_1(\tilde{x})=\pi_2(\tilde{x})$ and $b_a$ is well-defined for all $a\leq \tilde{x}$ and satisfies the condition $b_{\tilde{x}}=\tilde{x}$.

Having characterized the sets where $b_a$ is well-defined, we now plan to prove that $g(a)>0$ at the upper boundary of the set where $b_a$ is defined. Consider first the case where
$\pi_1(\hat{x}_1)\geq \pi_2(\hat{x}_2)$.
Utilizing \eqref{canonical} shows
\begin{align*}
g(\hat{a}) &=\int_{\hat{a}}^{\hat{x}_1} c(t)m'(t)dt-\pi_1(\hat{x}_1) m(\hat{a},\hat{x}_1) + \frac{q_d}{S'(\hat{x}_1)}+ \frac{q_u}{S'(\hat{a})}\\
&=\int_{\hat{a}}^{\hat{x}_1} \pi_1(t)m'(t)dt-\pi_1(\hat{x}_1) m(\hat{a},\hat{x}_1) +  \frac{q_d+q_u}{S'(\hat{a})}>0,
\end{align*}
since $\hat{x}_1=\argmin\{\pi_1(x)\}$. Consider now  the case where either $\hat{x}_1\geq \hat{x}_2$ and $\pi_1(\hat{x}_1)\leq \pi_2(\hat{x}_2)$ or $\hat{x}_1\leq \hat{x}_2$ and $\pi_1(\hat{x}_2)\leq \pi_2(\hat{x}_2)$. In those cases we find that
\begin{align*}
g(\hat{x}_2) &=\int_{\hat{x}_2}^{b_{\hat{x}_2}} c(t)m'(t)dt-\pi_1(b_{\hat{x}_2}) m(\hat{x}_2,b_{\hat{x}_2}) + \frac{q_d}{S'(b_{\hat{x}_2})}+ \frac{q_u}{S'(\hat{x}_2)}\\
&=\int_{\hat{x}_2}^{b_{\hat{x}_2}} \pi_2(t)m'(t)dt-\pi_2(\hat{x}_2) m(\hat{x}_2,b_{\hat{x}_2}) +  \frac{q_d+q_u}{S'(b_{\hat{x}_2})}>0,
\end{align*}
since $\hat{x}_2=\argmin\{\pi_2(x)\}$. Finally, if $\hat{x}_1\leq \hat{x}_2$ and either $\pi_2(\hat{x}_1)\geq \pi_1(\hat{x}_1)\geq \pi_2(\hat{x}_2)$ or $\pi_1(\hat{x}_2)\geq \pi_2(\hat{x}_2)\geq \pi_1(\hat{x}_1)$ holds, then
\begin{align*}
g(\tilde{x}) = \frac{q_d+q_u}{S'(\tilde{x})}>0
\end{align*}
proving the alleged positivity of $g(a)$ at the upper boundary of the set where $b_a$ is defined.

We now plan to establish that equation $g(a)=0$ has a unique root on $(-\infty,\hat{x}_2]$ by establishing that $g(a)$ is monotonically increasing on its domain and tends to $-\infty$ as $a\rightarrow -\infty$. To this end, assume that $a_1 < a_2$ and, therefore, that $b_1 > b_2$, where $b_i:=b_{a_i}, i=1,2$. Utilizing the definition of the function \eqref{roots}, as well as the identities \eqref{canonical} and $\pi_2(a_i)=\pi_1(b_i), i=1,2,$ yields
\begin{align*}
g(a_2)-g(a_1)&=\int_{a_1}^{a_2}(\pi_2(a_1)-\pi_2(t))m'(t)dt+\int_{b_2}^{b_1}(\pi_1(b_1)-\pi_1(t))m'(t)dt\\
&+(\pi_2(a_1)-\pi_2(a_2))m(a_2,b_2) > 0
\end{align*}
demonstrating that $g(a)$ is monotonically increasing. Moreover, since
\begin{align*}
g(a_2)-g(a_1) > (\pi_2(a_1)-\pi_2(a_2))m(a_2,b_2) \rightarrow \infty
\end{align*}
as $a_1\rightarrow-\infty$ we notice that $\lim_{a_1\rightarrow-\infty}g(a_1)=-\infty$. Combining this observation with the monotonicity and continuity of $g$ and the positivity of $g$ at the upper boundary of its domain demonstrates that equation $g(a)=0$ has a unique root $a^\ast\in (-\infty, \hat{x}_2]$. Moreover, since $g(a^\ast)=I_1(a^\ast, b^\ast)=I_2(a^\ast, b^\ast)=0$, where $b^\ast=b_{a^\ast}$,
we find that the pair $(a^\ast, b^\ast)$ constitutes the unique root of the optimality conditions \eqref{optimalitycond1} and \eqref{optimalitycond2}. Identity \eqref{representation}, in turn, follows directly from \eqref{suboptrepr}.

It remains to establish that the marginal value satisfies the inequality $-q_u\leq v'(x)\leq q_d$ for all $x\in \mathbb{R}$. It is clear that $v'(x)=q_d$ on $[b^\ast,\infty)$ and that $v'(x)=-q_u$ on $(-\infty,a^\ast]$. To establish that $-q_u\leq v'(x)\leq q_d$, on $(a^\ast,b^\ast)$, we first notice that since
$$
\frac{d}{dx}\left(\frac{v'(x)-q_d}{S'(x)}\right) = \left((\mathcal{A}v)(x)-q_d\mu(x)\right)m'(x)
$$
we obtain by invoking the boundary condition $v'(b^\ast)=q_d,$ and $\lambda^\ast=\pi_1(b^\ast)$ that
\begin{align}
\frac{v'(x)-q_d}{S'(x)} = \int_{x}^{b^\ast}(\pi_1(t)-\pi_1(b^\ast))m'(t)dt. \label{exp1}
\end{align}
We first prove that the integral expression in \eqref{exp1} is nonpositive on $[a^\ast,b^\ast]$. To see that this is indeed true, we first notice that if $\lim_{x\downarrow-\infty}\pi_1(x)\leq \pi_1(b^\ast)$, then the integrand is always nonpositive proving the alleged nonpositivity of the integral in that case. If, however,
$\lim_{x\downarrow-\infty}\pi_1(x)> \pi_1(b^\ast)$, then our assumptions on the function $\pi_1$ guarantee that there exists a uniquely defined state $y_1=\{x<\hat{x}_1: \pi_1(x)=\pi_1(b^\ast)\}$. However, since the integrand is nonpositive for all $t\in [y_1,b^\ast]$ and the integral is nonincreasing for $x\leq y_1$ we notice that the integral is nonpositive in that case as well. In order to complete the proof it is sufficient to show that
$$
\frac{v'(x)-q_d}{S'(x)} \geq -\frac{q_u+q_d}{S'(x)}
$$
for all $x\in[a^\ast,b^\ast]$. To this end consider now the function
\begin{align*}
D(x)=\int_{x}^{b^\ast}(\pi_1(t)-\pi_1(b^\ast))m'(t)dt+\frac{q_u+q_d}{S'(x)}.
\end{align*}
It is clear that $D(a^\ast)=0$, $D(b^\ast)=(q_u+q_d)/S'(b^\ast)>0$, and $D'(x)=(\pi_1(b^\ast)-\pi_2(x))m'(x)=(\pi_2(a^\ast)-\pi_2(x))m'(x)$. Two cases arise. If $\lim_{x\uparrow\infty}\pi_2(x)\leq \pi_2(a^\ast)$ then $D'(x)\geq0$ for all $x\in[a^\ast,b^\ast]$ and we are done. If, however, $\lim_{x\uparrow\infty}\pi_2(x)> \pi_2(a^\ast)$, then $D'(x)\geq 0$ for all $x\in[a^\ast, y_2]$ and $D'(x)< 0$ for all $x >y_2,$ where $y_2=\{x>\hat{x}_2: \pi_2(x)=\pi_2(a^\ast)\}$. Consequently, we notice that $D(x)\geq 0$ for all $x\in[a^\ast,b^\ast]$ in that case as well, completing the proof of our theorem.
\end{proof}

\begin{remark}
It is worth noticing that Theorem \ref{thm1} implies the marginal value can be re-expressed as a convex combination $$v'(x)= (1-p(x))q_d - p(x) q_u,$$
where
\begin{align}\label{marginalval}
p(x)=\frac{S'(x)}{q_u+q_d}\int_x^{b^\ast}(\pi_1(b^\ast)-\pi_1(t))m'(t)dt
\end{align}
satisfies $p(a^\ast)=1$ and $p(b^\ast)=0$.
\end{remark}

Theorem \ref{thm1} demonstrates that the monotonicity and limiting behavior of the functions $\pi_1$ and $\pi_2$ are the principal determinants of the pair of boundaries satisfying the first order optimality conditions \eqref{optimalitycond1} and \eqref{optimalitycond2}. According to Theorem \ref{thm1} the lower reflection boundary is situated on the set where $\pi_2$ is decreasing while the upper reflection boundary is situated on the set where $\pi_1$ is increasing. Consequently, no convexity or other second order properties are required for establishing the existence and uniqueness of a reflection. As is also clear from the proof or Theorem \ref{thm1} the assumed limiting behavior of the functions $\pi_1$ and $\pi_2$ are sufficient for the existence of the pair of boundaries. It is, however, clear that those assumptions are not necessary and can be weakened in some circumstances. More precisely, if the function $g$ defined in \eqref{roots} satisfies the condition $\lim_{a\rightarrow-\infty}g(a)<-\varepsilon$, where $\varepsilon>0$, then a unique pair $(a^\ast,b^\ast)$ satisfying the optimality conditions \eqref{optimalitycond1} and \eqref{optimalitycond2}
exists even when the limiting conditions $\lim_{x\uparrow\infty}\pi_1(x) = \lim_{x\downarrow-\infty}\pi_2(x)=\infty$ are not satisfied. We will illustrate this point later in an explicitly parameterized example.

Having proved the existence of the pair $(a^\ast,b^\ast)$ we now demonstrate that it is optimal as well and, therefore, that deviating from it results into higher long run average cumulative costs. Our main result on this subject is summarized in the next theorem.
\begin{theorem}\label{globalmin}
$(a^\ast, b^\ast)\in(-\infty,\hat{x}_2)\times(\hat{x}_1,\infty)$ is a global minimum on $C(a,b)$ and $C(a,b) \geq C(a^\ast, b^\ast)$ for all $-\infty < a < b<\infty$. Consequently, $\lambda \geq \lambda^\ast$ for all $-\infty < a < b<\infty$ and $Z_t^\ast=(U_t^{a^\ast}, D_t^{b^\ast})$ constitutes an optimal singular control within the considered class of reflection policies.
\end{theorem}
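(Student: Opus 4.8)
The plan is to argue by a verification (martingale) inequality rather than by a direct second-order analysis of $C$. The key object is the function $v$ constructed from the free boundary problem \eqref{freeboundary} with $a=a^\ast$ and $b=b^\ast$: it is affine with slope $-q_u$ on $(-\infty,a^\ast]$, affine with slope $q_d$ on $[b^\ast,\infty)$, solves $(\mathcal A v)(x)+c(x)=\lambda^\ast$ on $(a^\ast,b^\ast)$, and --- thanks to the second-order fit $v''(a^\ast)=v''(b^\ast)=0$ recorded in \eqref{2ndorder} --- is genuinely $C^2(\mathbb R)$. Theorem \ref{thm1} already supplies the two facts I will lean on: the marginal value obeys $-q_u\le v'(x)\le q_d$ on all of $\mathbb R$, and $\lambda^\ast=\pi_2(a^\ast)=\pi_1(b^\ast)$ with $a^\ast<\hat x_2$ and $b^\ast>\hat x_1$.

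First I would upgrade the interior equation to a global supersolution inequality $(\mathcal A v)(x)+c(x)\ge \lambda^\ast$ for every $x\in\mathbb R$. On $(a^\ast,b^\ast)$ this is an equality by construction. On the affine ray $[b^\ast,\infty)$ one has $v''\equiv 0$ and $v'\equiv q_d$, so $(\mathcal A v)(x)+c(x)=c(x)+q_d\mu(x)=\pi_1(x)\ge \pi_1(b^\ast)=\lambda^\ast$, the inequality following from the monotonicity of $\pi_1$ on $(\hat x_1,\infty)$; symmetrically, on $(-\infty,a^\ast]$ the expression equals $\pi_2(x)\ge\pi_2(a^\ast)=\lambda^\ast$ by the monotonicity of $\pi_2$ on $(-\infty,\hat x_2)$.

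Next I would apply the same change of variable formula employed in the proof of Lemma \ref{stationary}, now to $v$ and to an arbitrary reflection policy $Z=(U^a,D^b)$ keeping $X^Z$ in a compact interval $[a,b]$. Rearranging the resulting identity isolates $\int_0^T c(X_s^Z)ds+q_uU^a_T+q_dD^b_T$ as $\lambda^\ast T+(v(x)-v(X_T^Z))$ plus the running term $\int_0^T\big((\mathcal A v)(X_s^Z)+c(X_s^Z)-\lambda^\ast\big)ds$, plus the local-time contributions $(q_u+v'(a))U_T^a$ and $(q_d-v'(b))D_T^b$, plus the stochastic integral $\int_0^T\sigma(X_s^Z)v'(X_s^Z)dW_s$. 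Each local-time coefficient is nonnegative because $v'(a)\ge -q_u$ and $v'(b)\le q_d$ while $U^a,D^b$ are non-decreasing, and the running term is nonnegative by the supersolution bound just established. Taking expectations annihilates the stochastic integral (its integrand $\sigma v'$ is bounded on $[a,b]$, where $X^Z$ is confined), and dividing by $T$ and sending $T\to\infty$ makes the boundary term $(v(x)-\mathbb E_x[v(X_T^Z)])/T$ vanish since $v$ is bounded on $[a,b]$. What survives is $C(a,b)=\lim_{T}T^{-1}\mathbb E_x[\cdots]\ge\lambda^\ast=C(a^\ast,b^\ast)$, which is the claim; the optimality of $Z^\ast$ within the reflection class and the inequality $\lambda\ge\lambda^\ast$ are then immediate from Lemma \ref{stationary} and \eqref{suboptrepr}.

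The step I expect to be the main obstacle is justifying that $v$ enters the change of variable formula exactly as a $C^2$ function, with no extra local-time terms accruing at the free boundaries $a^\ast,b^\ast$; this is precisely what the smooth-fit conditions $v''(a^\ast)=v''(b^\ast)=0$ buy us, so the delicate point is really the boundary bookkeeping for $v$ rather than any new estimate. The remaining inputs --- the global slope bound $-q_u\le v'\le q_d$ and the nonnegativity of the running term --- are handed to us directly by Theorem \ref{thm1}, so once the supersolution inequality is recorded the argument is a clean verification.
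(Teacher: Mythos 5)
Your proposal is correct, but it proves the theorem by a genuinely different route than the paper. The paper's own proof is purely deterministic: using the expressions \eqref{optimalitycond1b} and \eqref{optimalitycond2b} for the partial derivatives of $C$, it shows that for each fixed $a$ the map $b\mapsto I_1(a,b)$ starts positive at $b=a$, is increasing on $(-\infty,\hat{x}_1)$, decreasing on $(\hat{x}_1,\infty)$, and tends to $-\infty$, hence has a unique zero with a definite sign pattern (and symmetrically for $I_2$ in $a$); this coordinate-wise quasiconvexity of $C$, combined with the uniqueness of the simultaneous root $(a^\ast,b^\ast)$ from Theorem \ref{thm1}, yields global minimality. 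You instead run a stochastic verification argument: extend $v$ to a $C^2$ global supersolution of $(\mathcal{A}v)+c\geq\lambda^\ast$ (the extension is valid exactly as you say, since \eqref{2ndorder} holds at $(a^\ast,b^\ast)$ and the monotonicity of $\pi_1$ on $(\hat x_1,\infty)$ and of $\pi_2$ on $(-\infty,\hat x_2)$ gives the inequality on the affine rays), apply the change-of-variable formula to an arbitrary reflection policy, discard the nonnegative running and local-time terms using $-q_u\leq v'\leq q_d$, and pass to the long-run average. All the inputs you invoke are indeed supplied by Theorem \ref{thm1}, and the martingale/boundary bookkeeping is the same as in the proof of Lemma \ref{stationary}, so there is no gap. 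What each approach buys: the paper's argument needs no value function at all and reveals the finer sign structure of $I_1,I_2$ (each coordinate section of $C$ is unimodal); yours is the standard HJB-verification template and is the natural starting point if one wanted to extend optimality beyond the reflection class to all admissible controls (which would, however, require additional transversality and integrability care once $X^Z$ is no longer confined to a compact interval). Note also that your probabilistic step can be compressed into a purely analytic one: integrating the supersolution inequality against $m'(t)\,dt$ over $(a,b)$, using $\int_a^b(\mathcal{A}v)(t)m'(t)dt = v'(b)/S'(b)-v'(a)/S'(a)$ and the slope bounds, gives $C(a,b)\geq\lambda^\ast$ in three lines --- this is exactly the device the paper itself uses in its volatility-comparison lemma.
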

\begin{proof}
Let us first investigate the behavior of the function $I_1(a,b)$. If $b_2>b_1>a$, then
$$
I_1(a,b_1)-I_1(a,b_2)=(\pi_1(b_2)-\pi_1(b_1))m(a,b_1)+\int_{b_1}^{b_2}(\pi_1(b_2)-\pi_1(t))m'(t)dt.
$$
It is now clear from our assumptions on $\pi_1$ that $I_1(a,b)$ is increasing on $(-\infty,\hat{x}_1)$ and decreasing on $(\hat{x}_1,\infty)$ as a function of $b$. Moreover, if $b_2>b_1\geq \hat{x}_1$, then
$$
I_1(a,b_1)-I_1(a,b_2)>(\pi_1(b_2)-\pi_1(b_1))m(a,b_1)\rightarrow \infty
$$
as $b_2\rightarrow\infty$. Consequently, $\lim_{b\rightarrow\infty}I_1(a,b)=-\infty$ for all $a\in \mathbb{R}$. Combining these observation with identity $I_1(a,a) = (q_u+q_d)/S'(a)>0$ implies that
$I_1(a,b)=0$ has a unique root $\hat{b}_a$ for any $a\in \mathbb{R}$ and $I_1(a,b)\gtreqqless 0$ for $b \lesseqqgtr \hat{b}_a$.
Establishing now that $I_2(a,b)$ is increasing on $(-\infty,\hat{x}_2)$, decreasing on $(\hat{x}_2,\infty)$ as a function of $a$ and satisfies $\lim_{a\rightarrow-\infty}I_2(a,b)=-\infty$ for all $b\in \mathbb{R}$ is completely analogous. Consequently, we notice that $I_2(a,b)=0$ has a unique root $\hat{a}_b$ for any $b\in \mathbb{R}$ and $I_2(a,b)\lesseqqgtr 0$ for $a \lesseqqgtr \hat{a}_b$. Combining these observations with the uniqueness of the pair $(a^\ast,b^\ast)$ and the equations \eqref{optimalitycond1b} and \eqref{optimalitycond2b} completes the proof of the first claim of our theorem. The second statement then follows directly from Lemma \ref{stationary} and \eqref{suboptrepr}.
\end{proof}

Theorem \ref{globalmin} establishes that the pair $(a^\ast,b^\ast)$ satisfying the ordinary first order conditions constitutes the global minimum of the long run average cumulative costs. Since this minimum can be attained by utilizing an admissible local time reflection policy $Z_t^\ast=(U_t^{a^\ast}, D_t^{b^\ast})$ we notice that $Z_t^\ast$ indeed constitutes the optimal admissible policy.

It is naturally of interest to investigate how increased volatility affects the optimal policy and its value. Our main finding on that subject is summarized in the following.
\begin{lemma}
Assume that the value $v$ is convex. Then, increased volatility expands the continuation region and increases the expected long run average costs.
\end{lemma}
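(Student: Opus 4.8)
The plan is to read the statement as a comparative-statics claim: fix the baseline volatility $\sigma$ with its (convex) value $v$, optimal boundaries $a^\ast<b^\ast$ and multiplier $\lambda^\ast$, and raise the volatility to any $\tilde\sigma$ with $\tilde\sigma(x)\geq\sigma(x)$ for all $x$. I would write $\tilde{\mathcal{A}}=\frac12\tilde\sigma^2\,d^2/dx^2+\mu\,d/dx$ for the new generator, $\tilde C(a,b)$ for the associated average-cost functional \eqref{aggregate} (built from the scale and speed of $\tilde\sigma$), and $\tilde a^\ast<\tilde b^\ast$, $\tilde\lambda^\ast$ for the optimal boundaries and value furnished by Theorems \ref{thm1} and \ref{globalmin} at the higher volatility. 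Crucially, $\pi_1$ and $\pi_2$ depend only on $c$ and $\mu$, so both problems share the same $\pi_1,\pi_2$ and the same minimizers $\hat x_1,\hat x_2$. I would establish the two assertions in the order: first $\tilde\lambda^\ast\geq\lambda^\ast$, and then deduce $\tilde a^\ast\leq a^\ast<b^\ast\leq\tilde b^\ast$ from it.

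The core step is to show that the baseline value $v$ acts as a verification function for the higher-volatility problem, namely
\begin{align*}
(\tilde{\mathcal{A}}v)(x)+c(x)\geq\lambda^\ast\quad\text{for all }x\in\mathbb{R},\qquad -q_u\leq v'(x)\leq q_d\quad\text{for all }x\in\mathbb{R}.
\end{align*}
The gradient bounds are exactly the last conclusion of Theorem \ref{thm1}. For the differential inequality I would split $\mathbb{R}$ into the continuation region and the two action regions. On $(a^\ast,b^\ast)$, using that $v$ solves \eqref{freeboundary} for $\sigma$,
\begin{align*}
(\tilde{\mathcal{A}}v)(x)+c(x)=\tfrac12\bigl(\tilde\sigma^2(x)-\sigma^2(x)\bigr)v''(x)+(\mathcal{A}v)(x)+c(x)=\tfrac12\bigl(\tilde\sigma^2(x)-\sigma^2(x)\bigr)v''(x)+\lambda^\ast\geq\lambda^\ast,
\end{align*}
and this last inequality is precisely where the hypotheses $\tilde\sigma\geq\sigma$ and, crucially, the convexity $v''\geq0$ of the value are invoked. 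On $[b^\ast,\infty)$ the value is affine, so $v''=0$ and $(\tilde{\mathcal{A}}v)(x)+c(x)=\pi_1(x)\geq\pi_1(b^\ast)=\lambda^\ast$ because $b^\ast>\hat x_1$ and $\pi_1$ is increasing on $(\hat x_1,\infty)$; symmetrically, on $(-\infty,a^\ast]$ one obtains $\pi_2(x)\geq\pi_2(a^\ast)=\lambda^\ast$ since $a^\ast<\hat x_2$ and $\pi_2$ is decreasing there. This yields the inequality globally.

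With the verification inequality in hand, I would rerun the Dol\'eans-Dade-Meyer/ergodic computation of Lemma \ref{stationary} with $v$ in place of the test function $f$ and the process reflected under $\tilde\sigma$ inside an arbitrary interval $[a,b]$. Because the reflected process stays bounded in $[a,b]$, the term $\mathbb{E}_x[v(X_T^Z)]/T$ vanishes as $T\to\infty$; feeding in $(\tilde{\mathcal{A}}v)+c\geq\lambda^\ast$ together with the bounds $-q_u\leq v'\leq q_d$ (which dominate the two nonnegative local-time rates with the correct signs) then produces $\tilde C(a,b)\geq\lambda^\ast$ for every $a<b$. Minimizing the left-hand side over $(a,b)$ and invoking Theorem \ref{globalmin} for the higher-volatility problem gives $\tilde\lambda^\ast=\min_{a<b}\tilde C(a,b)\geq\lambda^\ast$, so higher volatility raises the optimal long-run average cost.

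Finally, the expansion of the continuation region follows from \eqref{representation} applied at both volatility levels: $\lambda^\ast=\pi_1(b^\ast)=\pi_2(a^\ast)$ and $\tilde\lambda^\ast=\pi_1(\tilde b^\ast)=\pi_2(\tilde a^\ast)$, with $b^\ast,\tilde b^\ast>\hat x_1$ and $a^\ast,\tilde a^\ast<\hat x_2$. Since $\tilde\lambda^\ast\geq\lambda^\ast$, monotonicity of $\pi_1$ on $(\hat x_1,\infty)$ forces $\tilde b^\ast\geq b^\ast$, while the decreasingness of $\pi_2$ on $(-\infty,\hat x_2)$ forces $\tilde a^\ast\leq a^\ast$, so $(a^\ast,b^\ast)\subseteq(\tilde a^\ast,\tilde b^\ast)$. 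I expect the main obstacle, and the only place the convexity hypothesis is genuinely used, to be the differential inequality on the continuation region; everything else is a transcription of the monotonicity from Theorem \ref{thm1} and of the ergodic identities of Lemma \ref{stationary}. The one remaining point to verify carefully is the global $C^2$-regularity of $v$ needed for the change-of-variable formula, which is secured by the second-order smooth-fit conditions $v''(a^\ast)=v''(b^\ast)=0$ recorded in \eqref{2ndorder}.
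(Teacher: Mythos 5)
Your proposal is correct and follows essentially the same route as the paper: convexity yields the differential inequality $(\tilde{\mathcal{A}}v)(x)+c(x)\geq\lambda^\ast$, which combined with the gradient bounds $-q_u\leq v'\leq q_d$ from Theorem \ref{thm1} gives $\tilde{C}(a,b)\geq\lambda^\ast$ for every interval, and the expansion of the boundaries then follows from the monotonicity of $\pi_1,\pi_2$ together with \eqref{representation}. If anything, your explicit splitting into continuation and action regions is slightly more careful than the paper's own argument, whose displayed identity $(\hat{\mathcal{A}}v)(x)+c(x)-\lambda^\ast=\tfrac12(\hat{\sigma}^2(x)-\sigma^2(x))v''(x)$ is literally valid only on $(a^\ast,b^\ast)$, and the paper obtains the interval inequality by directly integrating $\frac{d}{dx}\bigl(v'(x)/\hat{S}'(x)\bigr)$ against the new speed density rather than re-running the ergodic It\^o computation of Lemma \ref{stationary}, but these are two renderings of the same calculation.
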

\begin{proof}
Denote as
$$
\hat{\mathcal{A}}=\frac{1}{2}\hat{\sigma}^2(x)\frac{d^2}{dx^2}+\mu(x)\frac{d}{dx}
$$
the differential operator associated with the more volatile process characterized by the diffusion coefficient $\hat{\sigma}$ satisfying the inequality $\hat{\sigma}(x)>\sigma(x)$ for all
$x\in \mathbb{R}$. If $v$ is convex, then clearly
$$
(\hat{\mathcal{A}}v)(x)+c(x)-\lambda^\ast = \frac{1}{2}(\hat{\sigma}^2(x)-\sigma^2(x))v''(x)>0
$$
implying that
$$
\frac{v'(b)}{\hat{S}'(b)}-\frac{v'(a)}{\hat{S}'(a)}>\lambda^\ast \hat{m}(a,b) - \int_a^bc(t)\hat{m}'(t)dt
$$
where $\hat{S}'$ denotes the density of the scale and $\hat{m}'$ the speed density of the more volatile underlying. Since $-q_u \leq v'(x) \leq q_d$ we notice that
$$
\lambda^\ast < \frac{1}{\hat{m}(a,b)}\left(\int_a^bc(t)\hat{m}'(t)dt + \frac{q_d}{\hat{S}'(b)}+\frac{q_u}{\hat{S}'(a)}\right).
$$
Since this inequality is valid for all $a,b$ it is valid for the optimal ones as well and, consequently we have $\lambda^\ast < \hat{\lambda}$, where $\hat{\lambda}$ denotes the the expected long run average costs in the more volatile setting. Since $\hat{\lambda}=\pi_1(\check{b})=\pi_2(\check{a})$, where $\check{a},\check{b}$ denote the optimal boundaries in the more volatile setting, we notice by combining this observation with the assumed monotonicity of the mappings $\pi_1$ and $\pi_2$ that $\check{a} < a^\ast$ and $\check{b}>b^\ast$, thus completing the proof of our lemma.
\end{proof}

In many practical economic and financial applications of singular stochastic control theory only one control policy can be endogenously determined while the other is exogenously set through the constraints affecting decision making. For example, cash flow policies may be subject to compulsory recapitalization should the prevailing reserves fall below an exogenously set critical level. In a completely analogous manner, reserve accumulation policies may be subject to obligatory redistribution should the reserves exceed a preset level. As intuitively is clear, in such situations, the considered problems constitute special cases of the general problem considered in Theorem \ref{thm1}. Our main findings focusing on these type of problems are now summarized in the next corollary of Theorem \ref{thm1}.
\begin{corollary}
(A) Assume that the lower boundary $a\in \mathbb{R}$ is exogenously set. Then, there exists a unique optimal reflection boundary $b^\ast_a\in (\hat{x}_1,\infty)$ satisfying the first order condition
\begin{align}
\int_a^{b^\ast_a}(\pi_1(b^\ast_a)-\pi_2(t))m'(t)dt=\frac{q_u+q_d}{S'(b^\ast_a)}.\label{singleb}
\end{align}
Moreover, $v'(x)\leq q_d$ for all $x\in \mathbb{R}$.\\
\noindent(B) Assume that the upper boundary $b\in \mathbb{R}$ is exogenously set. Then, there exists a unique optimal reflection boundary $a^\ast_b\in (-\infty,\hat{x}_2)$ satisfying the first order condition
\begin{align}
\int_{a^\ast_b}^{b}(\pi_2(a^\ast_b)-\pi_1(t))m'(t)dt=\frac{q_u+q_d}{S'(a^\ast_b)}.\label{singlea}
\end{align}
Moreover, $v'(x)\geq -q_u$ for all $x\in \mathbb{R}$.\\
\end{corollary}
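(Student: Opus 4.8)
The plan is to reduce each of (A) and (B) to a one-dimensional minimization of $C(a,b)$ in the single free variable and to recycle the monotonicity analysis of $I_1$ and $I_2$ already carried out in the proof of Theorem~\ref{globalmin}; I describe (A) in detail, (B) being its mirror image. The first step is to recognize that the stated condition \eqref{singleb} is simply the vanishing of $\partial C/\partial b$ with $a$ held fixed. By Lemma~\ref{stationary} and \eqref{optimalitycond2b}, the critical points of $b\mapsto C(a,b)$ are exactly the roots of $I_1(a,b)=0$. Using $\pi_1(t)-\pi_2(t)=(q_u+q_d)\mu(t)$ and substituting the canonical identity \eqref{canonical}, the left-hand side of \eqref{singleb} collapses term by term onto $\int_a^{b}(\pi_1(b)-\pi_1(t))m'(t)dt - \frac{q_u+q_d}{S'(a)}$, so that \eqref{singleb} and $I_1(a,b)=0$ are the same equation; this is a routine computation.

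Existence, uniqueness and global optimality then come almost for free. In the proof of Theorem~\ref{globalmin} it is shown that, for fixed $a$, the map $b\mapsto I_1(a,b)$ increases on $(-\infty,\hat{x}_1)$, decreases on $(\hat{x}_1,\infty)$, satisfies $I_1(a,a)=(q_u+q_d)/S'(a)>0$, and tends to $-\infty$ as $b\uparrow\infty$. Hence $I_1(a,\cdot)$ has a single zero $b^\ast_a$, which necessarily lies in $(\hat{x}_1,\infty)$ and obeys $I_1(a,b)>0$ on $(a,b^\ast_a)$ and $I_1(a,b)<0$ on $(b^\ast_a,\infty)$. Since $\partial C/\partial b=-\frac{m'(b)}{m^2(a,b)}I_1(a,b)$, this sign pattern shows $C(a,\cdot)$ is strictly decreasing up to $b^\ast_a$ and strictly increasing thereafter, so $b^\ast_a$ is the unique global minimizer of $b\mapsto C(a,b)$.

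For the marginal value bound I would invoke the representation \eqref{exp1}, which remains valid since the free boundary problem still imposes $v'(b^\ast_a)=q_d$ and $v''(b^\ast_a)=0$, whence $\lambda=\pi_1(b^\ast_a)$. Then $v'(x)\leq q_d$ on $[a,b^\ast_a]$ is equivalent to the nonpositivity of $J(x):=\int_x^{b^\ast_a}(\pi_1(t)-\pi_1(b^\ast_a))m'(t)dt$. The new ingredient, absent in the two-sided problem, is that the exogenous reflection condition $v'(a)=-q_u$ combined with \eqref{singleb} pins down $J(a)=-(q_u+q_d)/S'(a)<0$ exactly. Since $J'(x)=-(\pi_1(x)-\pi_1(b^\ast_a))m'(x)$ and $b^\ast_a>\hat{x}_1$, the unimodal shape of $\pi_1$ gives nonpositivity of $J$ throughout $[a,b^\ast_a]$: on the part where $\pi_1\leq\pi_1(b^\ast_a)$ the integrand is nonpositive, while on the remaining left part (where $\pi_1>\pi_1(b^\ast_a)$) $J$ is decreasing in $x$ and hence bounded above by $J(a)<0$. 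Together with $v'=q_d$ on $[b^\ast_a,\infty)$ and $v'=-q_u\leq q_d$ on $(-\infty,a]$, this yields $v'\leq q_d$ on all of $\mathbb{R}$.

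Part (B) is obtained by the symmetric argument: with $b$ fixed, one reads \eqref{singlea} as $I_2(a,b)=0$ through \eqref{canonical} and \eqref{optimalitycond1b}, and applies the companion monotonicity of $a\mapsto I_2(a,b)$ from Theorem~\ref{globalmin} (increasing on $(-\infty,\hat{x}_2)$, decreasing on $(\hat{x}_2,\infty)$, tending to $-\infty$ as $a\downarrow-\infty$) to obtain the unique global minimizer $a^\ast_b\in(-\infty,\hat{x}_2)$; the bound $v'\geq-q_u$ follows from the analogue of \eqref{exp1} for $(v'(x)+q_u)/S'(x)$, now anchored by the value $(q_u+q_d)/S'(b)>0$ coming from $v'(b)=q_d$ at the fixed boundary. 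I expect the one genuinely delicate point to be the sign control of $J$ (resp. its (B)-analogue) on the part of the interval lying on the ``wrong'' side of $\hat{x}_1$ (resp. $\hat{x}_2$): there the integrand has the adverse sign, and closing the estimate relies precisely on the exogeneity of one boundary, which supplies the sharp anchoring constant $-(q_u+q_d)/S'(a)$ at the fixed endpoint.
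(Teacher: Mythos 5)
Your proof is correct, and in substance it is the same argument as the paper's, just organized differently. The paper derives \eqref{singleb} from the free-boundary relations \eqref{equilibrium} together with $v''(b^\ast_a)=0$, and then establishes existence/uniqueness by studying $F(b)=\int_a^b(\pi_1(b)-\pi_2(t))m'(t)dt-(q_u+q_d)/S'(b)$ directly, showing $F(a)<0$, $F'(b)=\pi_1'(b)m(a,b)>0$ for $b>\hat{x}_1\vee a$, and $F(b)\rightarrow\infty$. Since $F\equiv -I_1(a,\cdot)$ (by exactly the computation you perform with $\pi_1-\pi_2=(q_u+q_d)\mu$ and \eqref{canonical}), your recycling of the monotonicity and limit properties of $I_1$ from the proof of Theorem \ref{globalmin} is the same analysis viewed through a sign change; your derivation of the first-order condition as the vanishing of $\partial C/\partial b$ via \eqref{optimalitycond2b} rather than via smooth fit is an equally valid entry point, since the paper itself records both routes in the two-boundary case. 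Your write-up adds two things the paper glosses over: the explicit verification that $b^\ast_a$ is the \emph{global} minimizer of $b\mapsto C(a,b)$ from the sign pattern of $I_1$, and the explicit anchoring argument for the bound $v'\le q_d$ --- the paper merely says this ``follows from \eqref{exp1} as proved in Theorem \ref{thm1}'', and your observation that the fixed endpoint supplies the anchor $J(a)=-(q_u+q_d)/S'(a)<0$ (via \eqref{singleb} itself) is precisely what makes that citation legitimate, since on the portion of the interval where $\pi_1>\pi_1(b^\ast_a)$ the integrand has the adverse sign. One slip you should fix: you assert that the left-hand side of \eqref{singleb} equals $\int_a^b(\pi_1(b)-\pi_1(t))m'(t)dt-(q_u+q_d)/S'(a)$; by \eqref{canonical} it actually equals that expression \emph{plus} $(q_u+q_d)/S'(b)$, and it is only after this term cancels against the right-hand side of \eqref{singleb} that the equation reduces to $I_1(a,b)=0$. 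The equivalence you conclude is therefore correct, but the intermediate identity as literally stated omits the very term that does the cancelling.
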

\begin{proof}
(A) Utilizing \eqref{equilibrium} in addition to the optimality condition $v''(b^\ast_a)=0$ implies that equation \eqref{singleb} has to be satisfied. In order to establish the existence and uniqueness of $b^\ast$, consider the function
$$
F(b)=\int_a^{b}(\pi_1(b)-\pi_2(t))m'(t)dt-\frac{q_u+q_d}{S'(b)}.
$$
It is clear that $F(a)=-(q_u+q_d)/S'(a)<0$ and $F'(b)=\pi_1'(b)m(a,b)>0$ for $b > \hat{x}_1\vee a$. If $b>y>\hat{x}_1\vee a$ then
$$
F(b)-F(y) = \int_y^b\pi_1'(t)m(a,t)dt > m(a,y)(\pi_1(b)-\pi_1(y))\rightarrow \infty
$$
as $b\uparrow\infty$. Comining this observation with the monotonicity of $\pi_1$ proves the existence and uniqueness of the threshold $b^\ast.$ The validity of inequality $v'(x)\leq q_d$ for all $x\in \mathbb{R}$ follows from \eqref{exp1} as proved in Theorem \ref{thm1}. Establishing part (B) is completely analogous.
\end{proof}

As is clear from Theorem \ref{thm1}, the potential asymmetry of the costs $c$ affects the optimal reflection boundaries only through its effect on the functions $\pi_1$ and $\pi_2$ which depend also on the drift coefficient of the underlying dynamics. The role of the costs is naturally pronounced in the absence of a drift since in that setting $\pi_1\equiv \pi_2$ and the incentives to exert the control policy are solely determined by the costs and volatility. Our findings on this special case are now summarized in the following.
\begin{theorem}\label{thm2}
Assume that $\mu\equiv 0$. Then, the optimality conditions
\begin{align}
I_1(a,b) &= \frac{1}{2}(q_d+q_u) + \int_a^b \frac{c(t)}{\sigma^2(t)}dt - c(b)\int_a^b \frac{1}{\sigma^2(t)}dt=0\label{optimalitycondnodrift1}\\
I_2(a,b) &= \frac{1}{2}(q_d+q_u) + \int_a^b \frac{c(t)}{\sigma^2(t)}dt - c(a)\int_a^b \frac{1}{\sigma^2(t)}dt=0\label{optimalitycondnodrift2}
\end{align}
have a uniquely determined solution $(a^\ast, b^\ast)\in(-\infty,0)\times(0,\infty)$ such that $\lambda^\ast=c(a^\ast)=c(b^\ast)$.
Moreover,
\begin{itemize}
  \item[(i)] The value $v$ is strictly convex and satisfies the inequality $-q_u \leq v'(x)\leq q_d$ for all $x\in \mathbb{R}$.
  \item[(ii)] Increased volatility expands the continuation region by decreasing the lower boundary and increasing the upper boundary.
\end{itemize}
\end{theorem}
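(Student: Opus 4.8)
The plan is to read every assertion off the general theory of Lemma~\ref{stationary} and Theorem~\ref{thm1}, specialized to $\mu\equiv 0$, and then to extract the additional regularity that the absence of drift forces. First I would record the simplifications produced by $\mu\equiv 0$: the scale density is constant, so after normalization $S'\equiv 1$, the speed density reduces to $m'(x)=2/\sigma^2(x)$, and the auxiliary functions collapse to $\pi_1=\pi_2=c$ with $\hat{x}_1=\hat{x}_2=0$. Inserting these into the general optimality conditions \eqref{optimalitycond1} and \eqref{optimalitycond2} reproduces, up to the harmless constant factor $2$, exactly the stated conditions \eqref{optimalitycondnodrift1} and \eqref{optimalitycondnodrift2}. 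Since in this setting the standing assumption~(i) reads ``$c$ is decreasing on $(-\infty,0)$ and increasing on $(0,\infty)$'' and assumption~(ii) reads $\lim_{|x|\to\infty}c(x)=\infty$, Theorem~\ref{thm1} applies directly and yields the unique root $(a^\ast,b^\ast)\in(-\infty,0)\times(0,\infty)$ together with the identity $\lambda^\ast=\pi_2(a^\ast)=\pi_1(b^\ast)=c(a^\ast)=c(b^\ast)$ of \eqref{representation}.

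For part~(i) the two-sided bound $-q_u\le v'(x)\le q_d$ is already contained in the conclusion of Theorem~\ref{thm1}, so the only fresh point is the convexity of $v$. Here I would go back to the free boundary problem \eqref{freeboundary}, whose interior equation with $\mu\equiv0$ is $\tfrac12\sigma^2(x)v''(x)=\lambda^\ast-c(x)$ on $(a^\ast,b^\ast)$, giving $v''(x)=2(\lambda^\ast-c(x))/\sigma^2(x)$. The decisive step is to show $\lambda^\ast-c(x)>0$ on the open interval: because $c$ is strictly decreasing on $(a^\ast,0)$ and strictly increasing on $(0,b^\ast)$ with $c(a^\ast)=c(b^\ast)=\lambda^\ast$, the function $c$ attains its maximum over $[a^\ast,b^\ast]$ only at the two endpoints, so $c(x)<\lambda^\ast$ for every interior $x$. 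Hence $v''>0$ on the continuation region, while on $(-\infty,a^\ast]$ and $[b^\ast,\infty)$ the value is affine with slopes $-q_u$ and $q_d$; since the smooth-fit conditions match these slopes at $a^\ast$ and $b^\ast$, the marginal value $v'$ is nondecreasing on $\mathbb{R}$ and strictly increasing on $(a^\ast,b^\ast)$, i.e.\ $v$ is convex and strictly convex on the continuation region.

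Part~(ii) then needs no new computation. Having established that $v$ is convex, I would invoke the earlier lemma stating that convexity of $v$ makes increased volatility expand the continuation region. Its argument produces, for the more volatile dynamics, boundaries $\check a,\check b$ with $\hat\lambda=\pi_1(\check b)=\pi_2(\check a)>\lambda^\ast$; combined with the strict monotonicity of $\pi_1=\pi_2=c$ about $0$, this forces $\check a<a^\ast$ and $\check b>b^\ast$, which is precisely the asserted expansion of the continuation region.

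The only genuinely delicate point I anticipate is the strict inequality $\lambda^\ast-c(x)>0$ underlying the convexity in part~(i): it relies on $c$ being strictly monotone on each side of its unique minimizer, so that the common endpoint value $c(a^\ast)=c(b^\ast)=\lambda^\ast$ strictly exceeds every interior value. Everything else is a transcription of Theorem~\ref{thm1} and the volatility lemma to the case $\pi_1=\pi_2=c$, $S'\equiv1$, $m'=2/\sigma^2$.
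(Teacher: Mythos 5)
Your proposal is correct. For the reduction ($S'\equiv 1$, $m'=2/\sigma^2$, $\pi_1=\pi_2=c$, $\hat{x}_1=\hat{x}_2=0$), the appeal to Theorem \ref{thm1} for existence, uniqueness and $\lambda^\ast=c(a^\ast)=c(b^\ast)$, and the convexity argument in part (i) --- $\tfrac12\sigma^2(x)v''(x)=\lambda^\ast-c(x)>0$ on $(a^\ast,b^\ast)$ because $c$ lies strictly below its common endpoint value there, with $v$ affine outside --- you follow the paper's proof essentially verbatim; the only cosmetic difference in (i) is that you quote the bound $-q_u\le v'(x)\le q_d$ from Theorem \ref{thm1}, whereas the paper re-derives it from convexity ($v'$ increases monotonically from $-q_u$ at $a^\ast$ to $q_d$ at $b^\ast$). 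The genuine divergence is part (ii). You invoke the paper's earlier lemma (convexity of $v$ implies increased volatility expands the continuation region and raises the long-run cost), and then use monotonicity of $c$ to place $\check{a}<a^\ast<0<b^\ast<\check{b}$. The paper instead argues directly: with $b_a=\{x>0:c(x)=c(a)\}$ independent of volatility, the function $g(a)=\tfrac12(q_d+q_u)+\int_a^{b_a}\frac{c(t)-c(a)}{\sigma^2(t)}dt$ has a negative integrand, so enlarging $\sigma^2$ to $\hat{\sigma}^2$ gives $\hat{g}(a)>g(a)$; since $g$ and $\hat{g}$ are increasing, the root of $\hat{g}$ lies below $a^\ast$, and the paired upper boundary $b_{\tilde{a}}$ then lies above $b^\ast$. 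Both routes work, but note what your shortcut leans on: the lemma's proof uses the identity $(\mathcal{A}v)(x)+c(x)=\lambda^\ast$ as if it held globally (it holds only on $(a^\ast,b^\ast)$; outside one only has $\pi_i(x)\ge\lambda^\ast$), and it needs $v''>0$ on a set meeting the competing interval to obtain the strict inequality $\hat{\lambda}>\lambda^\ast$. Both points are harmless in this driftless setting --- every candidate continuation region contains $0$, and you proved strict convexity there, as you yourself flag --- but your argument inherits them, whereas the paper's elementary comparison of $g$ and $\hat{g}$ bypasses the value function and the lemma altogether.
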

\begin{proof}
The first alleged result is a direct implication of Theorem \ref{thm1}. In order to establish the convexity of the value $v$, we first notice that $\lambda^\ast=c(a^\ast)=c(b^\ast)>c(x)$ for all $x\in(a^\ast,b^\ast)$ since $c(x)$ is decreasing on $\mathbb{R}_-$ and increasing on $\mathbb{R}_+$. Since $v\in C^2(\mathbb{R})$ is linear on $(-\infty,a^\ast)\cup(b^\ast,\infty)$ and satisfies on $(a^\ast,b^\ast)$ the inequality
$$
\frac{1}{2}\sigma^2(x)v''(x)=\lambda^\ast-c(x) >0
$$
then proves that $v$ is convex on $\mathbb{R}$. This observation also implies that $-q_u \leq v'(x)\leq q_d$ for all $x\in \mathbb{R}$.
It remains to prove that increased volatility expands the continuation region. To see that this is the case, we first observe that
$c(t) < c(b_a) = c(a)$ for all $t\in (a, b_a)$, where $b_a=\{x>0: c(x)=c(a)\}$ for all $a<0$. Consequently, if $\hat{\sigma}^2(x)>\sigma^2(x)$ for
all $x\in \mathbb{R}$ then
$$
g(a)=\int_a^{b_a}\frac{c(t) - c(a)}{\sigma^2(t)}dt+\frac{1}{2}(q_d+q_u) < \int_a^{b_a}\frac{c(t) - c(a)}{\hat{\sigma}^2(t)}dt+\frac{1}{2}(q_d+q_u)=:\hat{g}(a)
$$
for all $a<0$. Since $g$ and $\hat{g}$ are increasing on $\mathbb{R}_-$ we notice that $0=g(a^\ast) < \hat{g}(a^\ast)$ implying that the root $\tilde{a}$
of equation $\hat{g}(a)=0$ is below $a^\ast$. Consequently, $b_{\tilde{a}}>b_{a^\ast}=b^\ast$ completing the proof of the alleged claim.
\end{proof}

Theorem \ref{thm2} characterizes the optimal boundaries and the value explicitly in the case where the controlled diffusion constitutes a time changed Brownian motion.
Interestingly, our findings indicate that the familiar conclusion on the negative effect of increased volatility on the incentives to utilize the irreversible control policy
are true also in this case. The higher volatility gets, the larger the continuation region becomes. An interesting direct implication of Theorem \ref{thm2} is now summarized in our next corollary.
\begin{corollary}
Assume that $\mu\equiv 0$ and that the cost function $c(x)$ is even. Then $a^\ast=-b^\ast$ where the optimal boundary $b^\ast$ constitutes the unique positive root of equation
\begin{align}\label{optimalitycondnodrifteven}
\hat{g}(b^\ast)=\frac{1}{2}(q_d+q_u)+\int_{-b^\ast}^{b^\ast} \frac{c(t)}{\sigma^2(t)}dt - c(b^\ast)\int_{-b^\ast}^{b^\ast} \frac{1}{\sigma^2(t)}dt=0.
\end{align}
\end{corollary}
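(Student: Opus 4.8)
The plan is to deduce the corollary directly from the uniqueness statement of Theorem \ref{thm2} together with the evenness of $c$, and I expect the only genuinely delicate point to be verifying that the symmetry one exploits requires \emph{only} the evenness of the cost $c$ and not that of the diffusion coefficient $\sigma^2$. First I would record the consequences of Theorem \ref{thm2}: the system \eqref{optimalitycondnodrift1}--\eqref{optimalitycondnodrift2} possesses a unique solution $(a^\ast,b^\ast)\in(-\infty,0)\times(0,\infty)$, and at this solution $\lambda^\ast=c(a^\ast)=c(b^\ast)$. I would also recall from the proof of Theorem \ref{thm2} that under $\mu\equiv0$ assumption (i) makes $c$ strictly decreasing on $\mathbb{R}_-$ and strictly increasing on $\mathbb{R}_+$ with unique minimizer at $0$.

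The central observation I would make is that along the diagonal $a=-b$ the two first order conditions collapse into a single one. Indeed, setting $a=-b$ in \eqref{optimalitycondnodrift1} and \eqref{optimalitycondnodrift2}, the constant $\frac{1}{2}(q_d+q_u)$ and the integral $\int_{-b}^{b}c(t)/\sigma^2(t)\,dt$ are common to both, while the remaining terms are $-c(b)\int_{-b}^{b}\sigma^{-2}(t)\,dt$ and $-c(-b)\int_{-b}^{b}\sigma^{-2}(t)\,dt$ respectively; since $c$ is even these coincide, so $I_1(-b,b)=I_2(-b,b)=\hat g(b)$ with $\hat g$ as in \eqref{optimalitycondnodrifteven}. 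The key point here is that this step uses the evenness of $c$ \emph{alone}: the boundaries enter the conditions outside the integrals only through $c(a)$ and $c(b)$, and the integration range $[-b,b]$ is already symmetric, so no symmetry of $\sigma^2$ is needed. (A tempting alternative route is to invoke the global involution $(a,b)\mapsto(-b,-a)$ and uniqueness, but matching the integral terms under that map would spuriously require $\sigma^2$ to be even; the diagonal argument avoids this.)

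With the identity $I_1(-b,b)=I_2(-b,b)=\hat g(b)$ in hand I would argue in two directions. To obtain $a^\ast=-b^\ast$: from $c(a^\ast)=c(b^\ast)$ and evenness we get $c(-a^\ast)=c(b^\ast)$ with $-a^\ast,b^\ast>0$, and strict monotonicity of $c$ on $(0,\infty)$ then forces $-a^\ast=b^\ast$; substituting $a^\ast=-b^\ast$ into \eqref{optimalitycondnodrift1} shows $\hat g(b^\ast)=0$, so $b^\ast$ is a positive root of \eqref{optimalitycondnodrifteven}. For uniqueness of this root: any positive $b_0$ with $\hat g(b_0)=0$ makes $(-b_0,b_0)$ a solution of the full system $I_1=I_2=0$ by the diagonal identity, whence Theorem \ref{thm2} forces $(-b_0,b_0)=(a^\ast,b^\ast)$ and $b_0=b^\ast$. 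The main obstacle is thus conceptual rather than computational, namely isolating the diagonal identity as the precise mechanism that makes evenness of $c$ sufficient; once that is in place the remaining verifications are routine bookkeeping relying on the strict monotonicity of $c$ guaranteed by assumption (i).
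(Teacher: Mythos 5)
Your proposal is correct and takes essentially the same route as the paper: the paper's entire proof is the one-line assertion that the corollary is ``a direct implication of Theorem~\ref{thm2}'', and your argument—the diagonal identity $I_1(-b,b)=I_2(-b,b)=\hat g(b)$ (which indeed needs only evenness of $c$, not of $\sigma^2$), the identification $-a^\ast=b^\ast$ via $c(a^\ast)=c(b^\ast)$ and monotonicity, and uniqueness of the positive root via uniqueness of $(a^\ast,b^\ast)$—is precisely the implication the paper leaves implicit.
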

\begin{proof}
The alleged result is a direct implication of  Theorem \ref{thm2}.
\end{proof}

It is worth emphasizing that the optimality conditions \eqref{optimalitycond1} and \eqref{optimalitycond2} can also be re-expressed in terms of the stationary distribution of the controlled dynamics. More precisely,  since the process is maintained between the boundaries $a$ and $b$ and $m(a,b)<\infty$ when $-\infty < a < b < \infty$ we notice that $X_t^Z\rightarrow \bar{X}_\infty$ where $\bar{X}_\infty$ is distributed according to the stationary probability measure characterized by its density (cf. \cite{BS}, pp. 36--38)
$$
p_{a,b}(x) = \frac{m'(x)}{m(a,b)}.
$$
Utilizing this definition shows that the optimality conditions \eqref{optimalitycond1} and \eqref{optimalitycond2} can be re-expressed as
\begin{align}
\mathbb{E}\left[\pi_1(\bar{X}_\infty)\right] - \pi_1(b) &=-\frac{q_d+q_u}{S'(a)m(a,b)}\label{stationarycond1}\\
\mathbb{E}\left[\pi_2(\bar{X}_\infty)\right] - \pi_2(a) &=-\frac{q_d+q_u}{S'(b)m(a,b)}.\label{stationarycond2}
\end{align}
As we will later see in the analysis of single boundary control problems, the left hand side of these identities appears in the characterization of the optimal boundaries within a one-sided
reflection setting. In those cases the right hand side of the optimality conditions equals zero. Consequently, we notice that in the two-boundary setting the difference between the expected long-run cost flow and its value at the boundary falls short its value in the single boundary setting. As intuitively is clear, this difference is based on the larger flexibility a decision maker has in the present setting where the underlying can be controlled both upwards as well as downwards. Since this flexibility is partially lost in the single boundary setting, the cost savings are naturally lower in that case. Our key observation on the stationary characterization of the two-boundary problem is now summarized in the following theorem how identity \eqref{suboptrepr} arises by applying a standard reflection policy.

\section{Explicit Illustrations}

\subsection{Example A: Controlled Brownian Motion with Drift}

We now consider as an example the stationary singular stochastic control problem of the process
\begin{align}
dX^{Z}_t = \mu dt + \sigma dW_t + dU_t -
dD_t, {\hskip .1in}X^{Z}_0 = x\in \mathbb{R},\label{sdeBMD}
\end{align}
where $\mu>0$ and $\sigma>0$. In order to investigate how the potential asymmetries in the costs affect the optimal policy, we assume that $q_d=q_u=1$ and consider the piecewise linear cost $c(x) = \max(-k_1 x,k_2 x)$, where $k_1,k_2$ are known exogenously determined positive parameters. It is clear that $\pi_1(x)=\max(-k_1 x,k_2 x)+\mu$ and $\pi_2(x)=\max(-k_1 x,k_2 x)-\mu$ satisfy the required monotonicity and limiting conditions and consequently, our results apply. Standard integration yields
\begin{align}
I_1(a,b) &=\frac{e^{\frac{2\mu a}{\sigma^2}} \left(2 \mu (k_1  a + k_2 b +2\mu) - k_1 \sigma^2\right) +\sigma^2
   \left(k_1+k_2-k_2 e^{\frac{2 \mu b}{\sigma^2}}\right)}{2 \mu^2}\\
I_2(a,b) &= \frac{e^{\frac{2 \mu b}{\sigma^2}} \left(2 \mu (k_1  a + k_2 b +2\mu) -k_2 \sigma^2\right)+
\sigma^2 \left(k_1+k_2 - k_1 e^{\frac{2 \mu a}{\sigma^2}}\right)}{2 \mu ^2}.
\end{align}
Now
$$
b_a=-\frac{k_1}{k_2}a-\frac{2\mu}{k_2}
$$
implying that
$$
g(a) = \frac{\sigma^2}{2 \mu^2}\left(k_1+k_2-k_2 e^{-\frac{2 \mu}{\sigma ^2}\left(\frac{k_1}{k_2}a+\frac{2\mu}{k_2}\right)}-k_1 e^{\frac{2 \mu a}{\sigma^2}}\right).
$$
In this case $\hat{a} = -2\mu/k_1,$ $\lim_{a\downarrow -\infty} g(a)=-\infty$,
\begin{align*}
g(\hat{a}) =  \frac{\sigma^2 k_1}{2 \mu^2}\left(1-e^{-\frac{4 \mu^2}{\sigma^2 k_1}}\right) > 0,\\
\end{align*}
and
$$
g'(a) = \frac{k_1}{\mu}e^{\frac{2\mu a}{\sigma^2}}\left(e^{-\frac{2\mu a}{\sigma^2}\left(1+\frac{k_1}{k_2}\right)-\frac{4\mu^2}{\sigma^2 k_1}}-1\right)>0
$$
for all $a<-2\mu/(k_1+k_2)$. Since, $\hat{a}<-2\mu/(k_1+k_2)$ we notice that equation $g(a)=0$ has a unique root $a^\ast\in(-\infty,\hat{a})$. The case where $\mu\equiv 0$ is of special interest, since in that case
$$
b_a=-\frac{k_1}{k_2}a
$$
and
$$
g(a) = 1-\frac{k_1}{2\sigma^2}\left(1+\frac{k_1}{k_2}\right)a^2.
$$
Consequently, in the absence of drift the optimal boundaries read as
\begin{align*}
a^\ast &= -\sqrt{\frac{2k_2\sigma^2}{k_1(k_1+k_2)}}\\
b^\ast &= \sqrt{\frac{2k_1\sigma^2}{k_2(k_1+k_2)}}.
\end{align*}
We notice directly that increased volatility expands the continuation region. Moreover, since $b^\ast/a^\ast = -k_1/k_2$ we notice that the relative distance between the two optimal thresholds is inversely proportional to the marginal cost ratio $k_1/k_2$. The elasticities of these thresholds with respect to changes in the marginal costs read as
\begin{align*}
\frac{k_1}{a^\ast}\frac{\partial a^\ast}{\partial k_1} &= -\frac{2\theta+1}{2(\theta+1)}\quad \frac{k_1}{b^\ast}\frac{\partial b^\ast}{\partial k_1} =\frac{1}{2(\theta+1)}\\
\frac{k_2}{a^\ast}\frac{\partial a^\ast}{\partial k_2} &=\frac{\theta}{2(\theta+1)}\quad \quad\frac{k_2}{b^\ast}\frac{\partial b^\ast}{\partial k_2} =-\frac{\theta+2}{2(\theta+1)}
\end{align*}
where $\theta=k_1/k_2$. Hence, we notice that the sensitivities of the thresholds with respect to changes in the marginal cost ratio are asymmetric as well.

It is worth noticing that in the symmetric setting where $k_1= k_2 = k>0$ the function $g$ reads as
$$
g(a) = \frac{k \sigma^2}{2 \mu^2}e^{-\frac{2 \mu a}{\sigma^2}}\left(2e^{\frac{2 \mu a}{\sigma^2}}- e^{-\frac{4 \mu^2}{k\sigma^2}}-e^{\frac{4 \mu a}{\sigma^2}}\right)
$$
from which we can directly deduce that the optimal thresholds are
\begin{align*}
a^\ast &= \frac{\sigma^2}{2\mu}\ln\left(1-\sqrt{1-e^{-\frac{4 \mu^2}{k\sigma^2}}}\right)\\
b_{a^\ast} &=-\frac{\sigma^2}{2\mu}\ln\left(1-\sqrt{1-e^{-\frac{4 \mu^2}{k\sigma^2}}}\right)-\frac{2\mu}{k}.
\end{align*}
Invoking L'Hospital's rule in the symmetric setting yields
\begin{align*}
\lim_{\mu\rightarrow 0}a^\ast &= -\frac{\sigma}{\sqrt{k}}\\
\lim_{\mu\rightarrow 0}b_{a^\ast} &=\frac{\sigma}{\sqrt{k}}
\end{align*}
demonstrating how increased volatility increases linearly the continuation region in that case.

\subsection{Example B: Controlled Ornstein-Uhlenbeck-process}
In order to illustrate our findings in a mean-reverting setting, we now consider the stationary singular stochastic control problem of the process
\begin{align}
dX^{Z}_t = (\alpha - \beta X_t^{Z})dt + \sigma dW_t + dU_t -
dD_t, {\hskip .1in}X^{Z}_0 = x\in \mathbb{R},\label{sdeOU}
\end{align}
where $\alpha>0,\beta>0,$ and $\sigma>0$. We again assume that $q_d=q_u=1$ and that the cost is of the piecewise linear form $c(x) = \max(-k_1 x,k_2 x)$, where $k_1,k_2$ are known exogenously determined positive parameters satisfying the inequality $\beta<\min(k_1,k_2)$. It is clear that now $\pi_1(x)=\max(-k_1 x,k_2 x)+\alpha - \beta x$ and $\pi_2(x)=\max(-k_1 x,k_2 x)-\alpha + \beta x$ satisfy the required monotonicity and limiting conditions and consequently, our results again apply. In this case
\begin{align*}
S'(x) &= e^{-\frac{2\alpha x}{\sigma^2}+\frac{\beta}{\sigma^2}x^2}\\
m(a,b) &= \frac{2}{\sigma}\sqrt{\frac{\pi}{\beta}} e^{\frac{\alpha^2}{\beta\sigma^2}}\left(\Phi\left(\frac{\sqrt{2\beta}b}{\sigma}-\frac{\sqrt{2} \alpha }{\sqrt{\beta}\sigma}\right)-\Phi\left(\frac{\sqrt{2\beta}a}{\sigma}-\frac{\sqrt{2} \alpha }{\sqrt{\beta}\sigma}\right)\right),
\end{align*}
and
\begin{align*}
\textstyle
\int_a^bc(t)m'(t)dt & \scriptstyle= \frac{k_2}{\beta}e^{\frac{\alpha^2}{\sigma^2\beta}}\sqrt{2\pi}\left(\Phi'\left(\frac{\sqrt{2}\alpha}{\sqrt{\beta}\sigma}\right)-
\Phi'\left(\frac{\sqrt{2\beta} b}{\sigma}-\frac{\sqrt{2}\alpha}{\sqrt{\beta}\sigma}\right)+\frac{\sqrt{2}\alpha}{\sqrt{\beta}\sigma}\left(\Phi\left(\frac{\sqrt{2\beta} b}{\sigma}-\frac{\sqrt{2}\alpha}{\sqrt{\beta}\sigma}\right)-\Phi\left(-\frac{\sqrt{2}\alpha}{\sqrt{\beta}\sigma}\right)\right)\right)\\
&\scriptstyle-\frac{k_1}{\beta}e^{\frac{\alpha^2}{\sigma^2\beta}}\sqrt{2\pi}\left(\Phi'\left(\frac{\sqrt{2\beta} a}{\sigma}-\frac{\sqrt{2}\alpha}{\sqrt{\beta}\sigma}\right)-\Phi'\left(\frac{\sqrt{2}\alpha}{\sqrt{\beta}\sigma}\right)+\frac{\sqrt{2}\alpha}{\sqrt{\beta}\sigma}
\left(\Phi\left(-\frac{\sqrt{2}\alpha}{\sqrt{\beta}\sigma}\right)-\Phi\left(\frac{\sqrt{2\beta} a}{\sigma}-\frac{\sqrt{2}\alpha}{\sqrt{\beta}\sigma}\right)\right)\right),
\end{align*}
where $\Phi(x)$ denotes the cumulative distribution function of a standard normal random variable. As is clear from the expressions above, the auxiliary function
$$g(a)=I_1\left(a,\frac{\beta -k_1}{k_2-\beta }a-\frac{2 \alpha }{k_2-\beta }\right)$$ takes in this case a relatively complex form and is, thus, left unstated. The boundaries of the continuation region are illustrated in Figure \ref{OUh} as functions of the marginal cost $k_2$ for two volatilities ($\sigma=0.5, 0.75$) under the assumptions that
$\alpha = 0.1, \beta= 0.1$, and $k_1 = 0.5$. As is clear from the figure, an increase in the marginal cost $k_2$ associated with controlling the underlying diffusion downwards decreases both boundaries. The reason for this observation is intuitively clear. As $k_2$ becomes larger, controlling the diffusion downwards becomes more expensive while controlling the diffusion upwards becomes relatively cheaper. In line with standard real option models of irreversible decision making our numerical illustration indicates that increased volatility expands the continuation region by rising $b^\ast$ and lowering $a^\ast$. Interestingly, the relative impact of increased volatility appears to be stronger with respect to the boundary associated with the control policy with a lower marginal cost. This shows the intricate interaction between volatility and costs in the stationary singular stochastic control setting.
\begin{figure}[!ht]
\begin{center}
\includegraphics[width=0.5\textwidth]{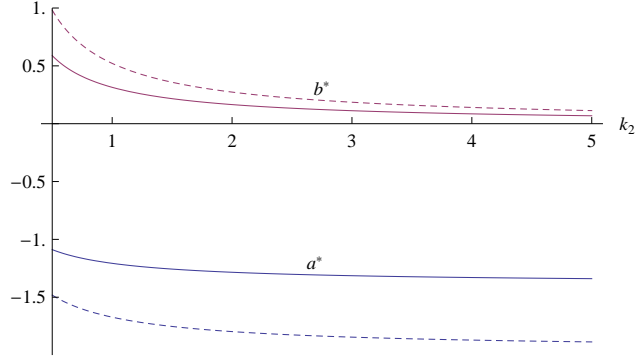}
\end{center}
\caption{\small The Optimal Reflection Boundaries ($\sigma=0.5$ uniform, $\sigma=0.75$ dashed)}\label{OUh}
\end{figure}

\subsection{Example C: A General Symmetric Problem}

In order to analyze an example leading to a symmetric control policies (in the spirit of the seminal study by \cite{Kar1}) we now consider the stationary singular stochastic control problem of the process
\begin{align}
dX^{Z}_t = \mu(X_t^{Z})dt + \sigma(X_t^{Z}) dW_t + dU_t -
dD_t, {\hskip .1in}X^{Z}_0 = x\in \mathbb{R},\label{sdesymmetric}
\end{align}
where the drift coefficient $\mu:\mathbb{R}\mapsto \mathbb{R}$ is assumed to be an odd function. In order to analyze symmetric stationary singular stochastic control policies, we assume that both the cost function $c(x)$ as well as the volatility coefficient $\sigma(x)$ are even and that $q_d=q_u=1$. It is clear that in this case for all $a<0$ it holds $$\pi_2(a) = c(a) - \mu(a) = c(-a)+\mu(-a) = \pi_1(-a)$$ indicating that the optimal policy is symmetric. By invoking symmetry we find that the upper optimal reflection boundary $b^\ast$ constitutes the unique root of equation $h(b)=0$, where
$$
h(b) = \int_0^b e^{\int_0^t\frac{2\mu(s)}{\sigma^2(s)}ds}\frac{(c(t)+\mu(t))}{\sigma^2(t)}dt-(c(b)+\mu(b))\int_0^b \frac{1}{\sigma^2(t)}e^{\int_0^t\frac{2\mu(s)}{\sigma^2(s)}ds}dt + \frac{1}{2}.
$$

We illustrate the boundaries of the continuation region in the case where $\mu(x) = \mu x, \sigma(x)=\sigma>0$, and $c(x)=|x|$. It is clear that in this case
$$
F(b)=\frac{1+\mu}{2\mu}\left(e^{\mu\left(\frac{b}{\sigma}\right)^2}-1\right)-(1+\mu)\left(\frac{b}{\sigma}\right)\int_0^{\frac{b}{\sigma}}e^{\mu y^2}dy+\frac{1}{2}
$$
proving that $b^\ast = \kappa^\ast \sigma$ and that $a^\ast = -\kappa^\ast \sigma$, where $\kappa^\ast>0$ constitutes the unique positive root of equation
$$
\frac{1+\mu}{2\mu}\left(e^{\mu{\kappa^\ast}^2}-1\right)+\frac{1}{2}-(1+\mu)\kappa^\ast \int_0^{\kappa^\ast}e^{\mu y^2}dy=0.
$$
The boundaries of the continuation region are illustrated in Figure \ref{Linearh} as functions of volatility under the assumption that
$\mu = 0.05$ (implying that $\kappa^\ast\approx 0.972044$).
\begin{figure}[!ht]
\begin{center}
\includegraphics[width=0.5\textwidth]{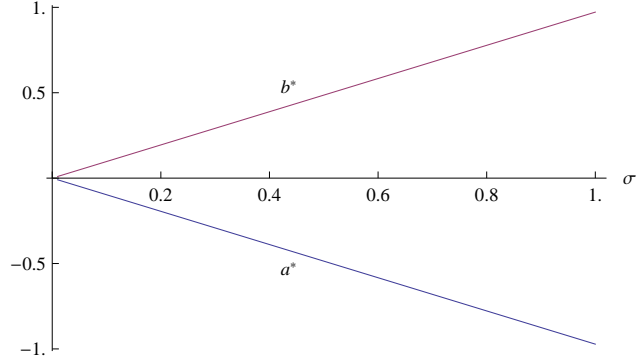}
\end{center}
\caption{\small The Optimal Reflection Boundaries}\label{Linearh}
\end{figure}

It is at this point worth emphasizing that as was argued after Theorem \ref{thm1}, the existence of the optimal reflection boundaries can be guaranteed even in cases where the limiting conditions on the mappings $\pi_1$ and $\pi_2$ are not satisfied. It is clear that in the present example the existence and uniqueness is guaranteed provided that $\lim_{b\rightarrow\infty}h(b) < -\varepsilon$, where $\varepsilon>0$. To illustrate such a case, assume that $c(x)=\max(1-e^x,1-e^{-x})$, $\mu\equiv 0$, and $\sigma(x)=\sigma>0$. In that case
$$
h(b)=\frac{1}{2}+\frac{1}{\sigma^2}\left(e^{-b}(1+b)-1\right)
$$
proving that the optimal boundary $b^\ast$ exists provided that $\sigma < \sqrt{2}$.

\begin{remark}
The example above indicates that the assumed continuity of the drift coefficient is not necessary for the validity of our results in some circumstances since the controlled stochastic differential equation \eqref{sde} has a weak solution also in situations where the drift coefficient is bounded but not necessarily continuous. Especially, if the underlying follows a Brownian motion with alternating drift
$$
dX^{Z}_t = \mu \sgn(X^{Z}_t) dt + \sigma X^{Z}_t dW_t + dU_t -
dD_t, {\hskip .1in}X^{Z}_0 = x\in \mathbb{R},
$$
$c(x)$ is an even function, and $q_d=q_u=q>0$, then there exists a unique pair of optimal reflection boundaries $-\infty < z^\ast < y^\ast < \infty$ satisfying the symmetry condition $y^\ast = -z^\ast$, where $y^\ast$ constitutes the unique positive root of equation
$$
q+\frac{2}{\sigma^2}\int_{0}^{y^\ast}e^{\frac{2\mu}{\sigma^2}t}c(t)dt-\left(e^{\frac{2\mu}{\sigma^2}y^\ast}-1\right)\frac{c(y^\ast)}{\mu}=0.
$$
\end{remark}

\section{Optimal Reflection at a Single Boundary}

In many practical applications focusing on the complete irreversibility of the implemented policy the control policy can take the controlled stock only to a single direction. Examples of such actions are, for example, rational harvesting planning or optimal dividend distribution. For the sake of completeness we will shortly focus on that class of problems in this section. More precisely, we will investigate the determination of the admissible policies $D$ and $U$ attaining the minimal expected asymptotic values
\begin{align}
\lim_{T\rightarrow \infty}\frac{1}{T}\mathbb{E}_x\int_0^T\left(c(X_s^Z)ds + q_d dD_s\right)\label{sscpdown}
\end{align}
and
\begin{align}
\lim_{T\rightarrow \infty}\frac{1}{T}\mathbb{E}_x\int_0^T\left(c(X_s^Z)ds + q_u dU_s\right),\label{sscpup}
\end{align}
respectively. As is clear, our original assumptions in the two-boundary setting need to be adjusted to the problems at hand. In the case of the downward control problem \eqref{sscpdown} we assume in addition to our assumptions on $\pi_1$ the following:
\begin{itemize}
  \item[(D1)] $\lim_{x\downarrow -\infty}m(x,b)<\infty$ and $\lim_{x\downarrow -\infty}\int_x^b c(t)m'(t)dt<\infty$ for all $b\in \mathbb{R}$
  \item[(D2)] $\lim_{x\downarrow -\infty}S'(x)=\infty$.
\end{itemize}
On the other hand, in the case of the upward control problem \eqref{sscpup} we assume in addition to our assumptions on $\pi_2$ the following:
\begin{itemize}
  \item[(U1)] $\lim_{x\uparrow \infty}m(a,x)<\infty$ and $\lim_{x\uparrow \infty}\int_a^x c(t)m'(t)dt<\infty$ for all $a\in \mathbb{R}$
  \item[(U2)] $\lim_{x\uparrow \infty}S'(x)=\infty$.
\end{itemize}
It is at this point worth mentioning that assumptions (D1) and (D2) guarantee that the lower boundary $-\infty$ is either natural or entrance (and, hence, {\em unattainable}) for the controlled diffusion in the absence
of interventions. An analogous argument holds for the upper boundary $\infty$ under the assumptions (U1) and (U2).

Before establishing our principal existence and uniqueness results on the optimal policy, we first notice that utilizing Lemma \ref{stationary} in addition to \eqref{canonical} and the conditions (D1), (D2), (U1), and (U2) to the problems at hand
yields in the downward reflection case
\begin{align*}
\lim_{T\rightarrow\infty}\frac{1}{T}\mathbb{E}_x\int_0^T\left(c(X_s^Z)ds + q_d dD^{b}_{s}\right)=
\frac{1}{m(-\infty,b)}\int_{-\infty}^b\pi_1(t)m'(t)dt
\end{align*}
for an arbitrary boundary $b\in \mathbb{R}$ and in the upward reflection case
\begin{align*}
\lim_{T\rightarrow\infty}\frac{1}{T}\mathbb{E}_x\int_0^T\left(c(X_s^Z)ds + q_U dU^{a}_{s}\right)=
\frac{1}{m(a,\infty)}\int_{a}^{\infty}\pi_2(t)m'(t)dt
\end{align*}
for an arbitrary boundary $a\in \mathbb{R}$. Define the functions $J_1:\mathbb{R}\mapsto \mathbb{R}$ and $J_2:\mathbb{R}\mapsto \mathbb{R}$ as
\begin{align*}
J_1(b) &=\frac{1}{m(-\infty,b)}\int_{-\infty}^b\pi_1(t)m'(t)dt\\
J_2(a) &=\frac{1}{m(a,\infty)}\int_{a}^{\infty}\pi_2(t)m'(t)dt.
\end{align*}
Standard differentiation then yields
\begin{align*}
J_1'(b) &=\frac{m'(b)}{m^2(-\infty,b)}\int_{-\infty}^b(\pi_1(b)-\pi_1(t))m'(t)dt\\
J_2'(a) &=\frac{m'(a)}{m^2(a,\infty)}\int_{a}^{\infty}(\pi_2(t)-\pi_2(a))m'(t)dt.
\end{align*}
We can now immediately establish the following result.
\begin{theorem}\label{onebdry}
(A) The optimality condition
\begin{align}
\int_{-\infty}^{b^\ast}\pi_1(t)m'(t)dt=\pi_1(b^\ast)m(-\infty,b^\ast)\label{upper}
\end{align}
has a unique solution $b^\ast\in (\hat{x}_1,\infty)$ such that
$$
\lim_{T\rightarrow\infty}\frac{1}{T}\mathbb{E}_x\int_0^T\left(c(X_s^Z)ds + q_d dD^{b^\ast}_{s}\right)=\pi_1(b^\ast).
$$
Moreover,
$$
\lim_{T\rightarrow\infty}\frac{1}{T}\mathbb{E}_x\int_0^T\left(c(X_s^Z)ds + q_d dD^{b}_{s}\right) \geq \pi_1(b^\ast)
$$
for all $b\neq b^\ast$.\\

\noindent(B) The optimality condition
\begin{align}
\int_{a^\ast}^{\infty}\pi_2(t)m'(t)dt=\pi_2(a^\ast)m(a^\ast,\infty)\label{lower}
\end{align}
has a unique solution $a^\ast\in (-\infty,\hat{x}_2)$ such that
\begin{align*}
\lim_{T\rightarrow\infty}\frac{1}{T}\mathbb{E}_x\int_0^T\left(c(X_s^Z)ds + q_U dU^{a^\ast}_{s}\right)= \pi_2(a^\ast).
\end{align*}
Moreover,
\begin{align*}
\lim_{T\rightarrow\infty}\frac{1}{T}\mathbb{E}_x\int_0^T\left(c(X_s^Z)ds + q_U dU^{a}_{s}\right)\geq \pi_2(a^\ast)
\end{align*}
for all $a\neq a^\ast$.
\end{theorem}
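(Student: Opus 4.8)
The plan is to recognise that the claimed optimality condition \eqref{upper} is precisely the first-order condition $J_1'(b)=0$ for the average-cost functional $J_1$ introduced just above the statement, and then to study $J_1'$ directly. Since $J_1'(b)$ carries the sign of its bracketed integral, I would set $K(b):=\int_{-\infty}^b(\pi_1(b)-\pi_1(t))m'(t)\,dt$, so that $J_1'(b)=m'(b)K(b)/m^2(-\infty,b)$ and, because $m'>0$ and $m(-\infty,b)<\infty$, the sign of $J_1'$ agrees everywhere with the sign of $K$. Before anything else I would record well-definedness: condition (D1) makes $m(-\infty,b)$ and $\int_{-\infty}^bc(t)m'(t)\,dt$ finite, while letting $a\downarrow-\infty$ in the identity \eqref{canonical} together with (D2) (which forces $1/S'(a)\to 0$) gives $\int_{-\infty}^b\mu(t)m'(t)\,dt=1/S'(b)<\infty$; hence $\int_{-\infty}^b\pi_1(t)m'(t)\,dt$ converges and $J_1,K$ are finite.

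The core of the argument is a monotonicity-and-limit analysis of $K$. Differentiating, the boundary term cancels and I obtain $K'(b)=\pi_1'(b)m(-\infty,b)$. By hypothesis (i) on $\pi_1$ we have $\pi_1'<0$ on $(-\infty,\hat{x}_1)$ and $\pi_1'>0$ on $(\hat{x}_1,\infty)$, so $K$ is strictly increasing on $(\hat{x}_1,\infty)$; on $(-\infty,\hat{x}_1]$ the integrand $\pi_1(b)-\pi_1(t)$ is nonpositive, whence $K(\hat{x}_1)<0$. For the upper limit I would fix $b_0>\hat{x}_1$ and bound $K(b)-K(b_0)=\int_{b_0}^b\pi_1'(t)m(-\infty,t)\,dt\geq m(-\infty,b_0)(\pi_1(b)-\pi_1(b_0))$, which tends to $+\infty$ by assumption (ii); hence $K(b)\to+\infty$. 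Continuity and strict monotonicity on $(\hat{x}_1,\infty)$ then yield a unique root $b^\ast\in(\hat{x}_1,\infty)$ of $K$, i.e. the unique solution of \eqref{upper}. Since $K<0$ on $(-\infty,b^\ast)$ and $K>0$ on $(b^\ast,\infty)$, the functional $J_1$ strictly decreases up to $b^\ast$ and strictly increases thereafter, so $b^\ast$ is its global minimiser and $J_1(b)>J_1(b^\ast)$ for every $b\neq b^\ast$. Finally, substituting \eqref{upper} into the definition of $J_1$ collapses the quotient to $J_1(b^\ast)=\pi_1(b^\ast)$, which simultaneously identifies the minimal average cost and proves the equality and the inequality asserted in (A).

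The step I expect to demand the most care is controlling the behaviour at the unattainable boundary $-\infty$: the whole argument rests on the joint use of (D1) and (D2) to secure convergence of the speed integral and of $\int_{-\infty}^b\mu\,m'$, while the limiting hypothesis $\lim_{x\uparrow\infty}\pi_1(x)=\infty$ is what drives the divergence $K(b)\to+\infty$ and so guarantees that a minimiser actually exists rather than the optimum being pushed to $+\infty$. Part (B) is entirely symmetric: I would put $L(a):=\int_a^\infty(\pi_2(t)-\pi_2(a))m'(t)\,dt$, note that $J_2'(a)$ shares the sign of $L(a)$, compute $L'(a)=-\pi_2'(a)m(a,\infty)$, and run the mirror-image monotonicity and limit argument on $(-\infty,\hat{x}_2)$—now invoking (U1), (U2) for finiteness and $\lim_{x\downarrow-\infty}\pi_2(x)=\infty$ for the divergence of $L$—to obtain the unique minimiser $a^\ast\in(-\infty,\hat{x}_2)$ with $J_2(a^\ast)=\pi_2(a^\ast)$.
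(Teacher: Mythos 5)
Your proposal is correct and follows essentially the same route as the paper: your function $K(b)$ is precisely the paper's auxiliary function $l_1(b)$, and you run the identical argument (negativity on $(-\infty,\hat{x}_1]$, strict increase via $K'(b)=\pi_1'(b)m(-\infty,b)$, divergence to $+\infty$ from the limiting hypothesis on $\pi_1$, hence a unique root that is the global minimiser of $J_1$ with $J_1(b^\ast)=\pi_1(b^\ast)$, and a mirror-image argument for part (B)). Your explicit verification of integrability at $-\infty$ via (D1), (D2) and the identity \eqref{canonical} is a welcome detail that the paper leaves implicit, but it does not change the structure of the proof.
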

\begin{proof}
(A) Consider the function
$$
l_1(b) = \int_{-\infty}^b(\pi_1(b)-\pi_1(t))m'(t)dt.
$$
It is clear that $l_1(b)<0$ for all $b\leq \hat{x}_1$ and that $l_1'(b)=\pi_1'(b)m(-\infty,b)>0$ for all $b>\hat{x}_1$. If $b> z > \hat{x}_1$ we notice that
$$
l_1(b) - l_1(z) = \int_z^b\pi_1'(t)m(-\infty,t)dt > m(-\infty,z)(\pi_1(b)-\pi_1(z))\rightarrow\infty
$$
as $b\uparrow\infty$. Hence, equation $l_1(b)=0$ has a unique root $b^\ast = \argmin\{J_1(b)\}>\hat{x}_1$ constituting the global minimum of the functional $J_1(b)$ and satisfying
$J_1(b^\ast)=\pi_1(b^\ast)$. Establishing part (B) is entirely analogous.
\end{proof}

Theorem \ref{onebdry} summarizes our main findings on the single boundary setting. It is worth noticing that the optimality condition \eqref{upper} can be
re-expressed as
$$
\mathbb{E}[\pi_1(\bar{X}_\infty)]-\pi_1(b^\ast) = 0.
$$
Analogously, the optimality condition \eqref{lower} can be
re-expressed as
$$
\mathbb{E}[\pi_2(\bar{X}_\infty)]-\pi_2(a^\ast)=0.
$$
As was pointed out in the analysis of the two boundary problem, these optimality conditions are associated with the conditions \eqref{stationarycond1} and \eqref{stationarycond2}. The main difference is naturally the absence of the term associated with the control policy acting to the opposite direction.
It is also worth noticing that the optimality conditions \eqref{upper} and \eqref{lower} constitute the limit of the optimality condition (3.17) in Proposition 3.5 of \cite{HeStZh} as the fixed transaction/ordering costs tend zero and the costs are linear.

\subsection{Example: Controlled Ornstein-Uhlenbeck process}

In order to exemplify our general results, we now consider the downward reflection problem \eqref{sscpdown} in the case where $c(x)=|x|$, $\mu(x)=-\mu x$, and $\sigma(x)=\sigma$, where $\mu>0$ and $\sigma>0$. In that case the optimality condition reads for $b>0$ as
$$
\frac{1}{\mu}\left(2-e^{-\frac{\mu}{\sigma^2}b^2}\right)+q_d e^{-\frac{\mu}{\sigma^2}b^2} = (b-q_d \mu b)\frac{2}{\sigma}\sqrt{\frac{\pi}{\mu}}\Phi\left(\frac{\sqrt{2\mu}}{\sigma}b\right).
$$
As in the two-boundary case, we again observe that the optimal boundary is linear $b^\ast = \zeta^\ast \sigma$ as a function of volatility, where the ratio $\zeta^\ast$ constitutes the root of equation
$$
\frac{1}{\mu}\left(2-e^{-\mu{\zeta^\ast}^2}\right)+q_d e^{-\mu{\zeta^\ast}^2} = (1-q_d \mu)2\zeta^\ast\sqrt{\frac{\pi}{\mu}}\Phi\left(\sqrt{2\mu}\zeta^\ast\right).
$$
The critical ratio $\zeta^\ast$ is illustrated in Figure \ref{OUone} as a function of the drift coefficient $\mu$ under the assumption that $q_d=0.1$.
\begin{figure}[!ht]
\begin{center}
\includegraphics[width=0.5\textwidth]{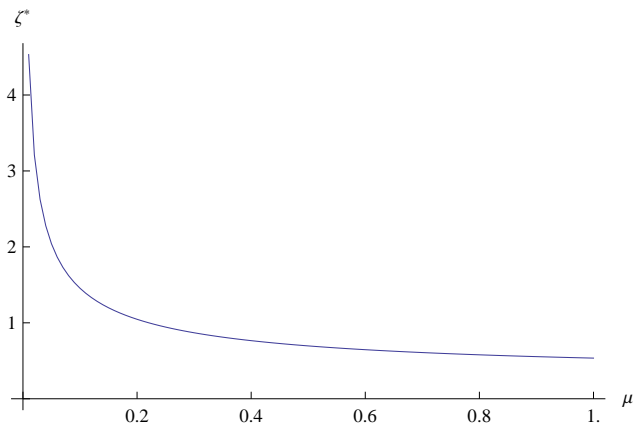}
\end{center}
\caption{\small The Critical Ratio $\zeta^\ast$}\label{OUone}
\end{figure}
As is clear from Figure \ref{OUone}, the sensitivity of the optimal boundary with respect to changes in volatility depends on the rate at which the underlying is expected to grow. The smaller the growth rate is, the higher the multiplier becomes.

\section{Conclusions}

We considered a class of ergodic singular stochastic control problems of a regular linear diffusion. We characterized the optimal policy and its value by relying on relatively elementary results from the classical theory of linear diffusions and ordinary optimization. Our results indicate that the convexity or symmetry of costs is not necessary for the existence of an optimal control characterized by two optimal thresholds at which the underlying diffusion should be reflected by utilizing a standard local time reflection policy. Our results also indicate that increased volatility expands the continuation region and increases the expected long run average costs whenever the value is convex;  a result which is in line with results of standard models focusing on the minimization of the expected cumulative present value of the costs.

The singular stochastic control setting considered in this paper is just one class of bounded variation controls arising in the literature applying stochastic control theory. As is known, it is associated with the most flexible type of that type of policies since the decision maker can adjust the path by infinitesimal amounts at any time. In many practical situations this is unfortunately not possible due to, for example, the presence of fixed controlling costs. Such situations result into impulse control models which require a slightly different analysis. However, the recent results by \cite{HeStZh} on inventory models indicate that analyzing that class of problems within the ergodic setting is doable at least in the single boundary setting. It would, therefore, be of interest to try to extend our findings to the two boundary impulse control setting. This is to my best knowledge a still open question left for future research.

\end{document}